\newtheorem{theorem}{Theorem}[section]
\newtheorem{lemma}[theorem]{Lemma}
\newtheorem{corollary}[theorem]{Corollary}
\newtheorem{proposition}[theorem]{Proposition}
\theoremstyle{definition}
\newtheorem{definition}[theorem]{Definition}
\newtheorem{notation}[theorem]{Notation}
\newtheorem{remark}[theorem]{Remark}
\newtheorem{conj}[theorem]{Conjecture}
\newtheorem{NC}[theorem]{Notation and Convention}
\newtheorem{NA}[theorem]{Notation and Assumptions}
\def\C{{\mathbb C}}
\def\P{{\mathbb P}}
\def\R{{\mathbb R}}
\def\Z{{\mathbb Z}}
\def\a{{\bf a}}
\def\cE{{\mathcal E}}
\def\cG{{\mathcal{G}}}
\def\cM{{\mathcal M}}
\def\cO{{\mathcal{O}}}
\def\cS{{\mathcal S}}
\def\cU{{\mathcal U}}
\def\cW{{\mathcal W}}
\def\cOperatorname#1{\mathop{\rm #1}\nolimits}
\def\codim{\cOperatorname{codim}}
\def\deg{\cOperatorname{deg}}
\def\ME{{\cOperatorname{ME}}}
\newcommand{\cME}[1]{\cOverline{\ME}}
\begin{document}

\title{Fano manifolds of coindex three admitting nef tangent bundle}

\author{Kiwamu Watanabe}
\date{\today}
\address{Course of Matheorematics, Programs in Matheorematics, Electronics and Informatics, 
Graduate School of Science and Engineering, Saitama University.
Shimo-Okubo 255, Sakura-ku Saitama-shi, 338-8570, Japan.}
\email{kwatanab@rimath.saitama-u.ac.jp}
\thanks{The author is partially supported by JSPS KAKENHI Grant Number 17K14153.}

\subjclass[2010]{14J40, 14J45, 14M17.}
\keywords{}

\begin{abstract} We prove that any Fano manifold of coindex three admitting nef tangent bundle is homogeneous. 
\end{abstract}

\maketitle

\section{Introduction} 

By Mori's solution of the Hartshorne conjecture \cite{Mori1}, the only projective manifold with ample tangent bundle is the projective space. An analytic counterpart of the Hartshorne conjecture is the Frankel conjecture: the only compact K\"alher manifold with positive holomorphic bisectional curvature is the projective space, which can be obtained as a corollary of the Hartshorne conjecture. An analytic proof of the Frankel conjecture was obtained by Y. T. Siu and S. T. Yau \cite{SiuYau}. Following these works, N. Mok classified compact K\"ahler manifold with nonnegative holomorphic bisectional curvature \cite{Mk0}. Based on Mok's result, F. Campana and T. Peternell \cite{CP1} conjectured that any compact K\"ahler manifold with nef tangent bundle admits a finite \'etale cover $\tilde{X} \to X$ such that the Albanese map $\tilde{X} \to {\rm Alb}(\tilde{X})$ is a fiber bundle whose fibers are rational homogeneous manifolds. 
By the work of J. P. Demailly, Peternell and M. Schneider \cite{DPS}, the conjecture can be reduced to the Fano case: 
\begin{conj}[Campana-Peternell Conjecture]\label{conj:CP} Any complex Fano manifold with nef tangent bundle is homogeneous.
\end{conj}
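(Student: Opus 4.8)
The plan is to prove the statement by strong induction on $n=\dim X$, reducing first to the case of Picard number one and then treating that case through the geometry of minimal rational curves. For the reduction I would use the structure theory of contractions of a Fano manifold with nef tangent bundle: building on Demailly--Peternell--Schneider and the smoothness theorem of Sol\'a Conde--Wi\'sniewski, every elementary contraction $\varphi\colon X\to Y$ of such an $X$ is a smooth fibration of fiber type onto a Fano manifold $Y$ with $\rho(Y)=\rho(X)-1$, and both $Y$ and a general fiber $F$ are again Fano manifolds with nef tangent bundle. When $\rho(X)\geq 2$ this forces $0<\dim F<n$ and $0<\dim Y<n$, so the induction hypothesis applies to $Y$ and to $F$, identifying each with a rational homogeneous space. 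The remaining subtlety is that homogeneity of base and fiber does not by itself give homogeneity of $X$; to close this I would show the fibration is a homogeneous bundle, using the deformation rigidity of rational homogeneous spaces together with the action of $\Aut(F)$ to realize $X$ as a bundle associated to a principal bundle on $Y$, whence $\Aut(X)$ acts transitively.

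The heart of the matter is thus the case $\rho(X)=1$, where I would prove that $X$ is either $\mathbb{P}^n$ (the case of ample $T_X$, by Mori) or a rational homogeneous space $G/P$ of Picard number one. Here the tool is the family of minimal rational curves through a general point $x\in X$ and its variety of minimal rational tangents (VMRT) $\cC_x\subset\mathbb{P}(T_xX)$. Nefness of $T_X$ rigidly controls the splitting along a minimal rational curve $\ell$, forcing $T_X|_\ell\cong\cO(2)\oplus\cO(1)^{\oplus p}\oplus\cO^{\oplus(n-1-p)}$ with no negative factors; I would use this to show $\cC_x$ is smooth of the expected dimension $p$ and, more importantly, that it is again a Fano manifold carrying nef tangent bundle, now of dimension $p<n$. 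By the induction hypothesis $\cC_x$ is then homogeneous, and one further needs to pin down its projective second fundamental form so as to recover its embedding in $\mathbb{P}(T_xX)$ as the model of a homogeneous space.

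With $\cC_x$ identified with the VMRT of a rational homogeneous space $S$ of Picard number one, I would invoke the Cartan--Fubini type recognition theorem of Hwang--Mok: a Fano manifold of Picard number one whose VMRT at a general point is projectively equivalent to that of such an $S$ is isomorphic to $S$. This completes the case $\rho(X)=1$ and, together with the reduction above, the full statement.

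The hard part --- and the reason the conjecture is still open in general --- is precisely this Picard-number-one step of controlling $\cC_x$. The nef hypothesis yields strong numerical constraints, but it is not known how to guarantee in all cases that $\cC_x$ is smooth with nef tangent bundle (so that the induction closes), nor to exclude VMRT configurations that match no homogeneous model. Since the recognition theorem requires an exact projective identification of $\cC_x$ with a homogeneous VMRT, producing that identification without auxiliary bounds on the index, the coindex, or the dimension of $X$ is the crux where the general argument currently breaks down, and it is exactly such a bound (on the coindex) that makes the theorem of this paper tractable.
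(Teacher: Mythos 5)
The statement you are trying to prove is Conjecture~\ref{conj:CP}, the Campana--Peternell conjecture itself, and the paper offers no proof of it: the paper explicitly records it as open in general and proves only the coindex-three case (Theorem~\ref{MT}), by combining Mukai's classification (Theorem~\ref{thm:Mukai}) with case-by-case eliminations --- non-nefness of the diagonal for the weighted complete intersection cases via $\deg c_n(X)$ and Proposition~\ref{prop:LO}, and, for linear sections of $S_4$, a line with non-nef normal bundle (Proposition~\ref{prop:codim2}) or the two-$\P^1$-bundle-structure argument through the $V_5$-bundle $q:\cU_X\to X$ (Proposition~\ref{prop:codim3}). So no proposal along your lines can be checked against a proof in the paper; it can only be assessed as a research program, and as such it has genuine gaps --- which, to your credit, you partly acknowledge in your final paragraph.

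Concretely, three steps of your outline are not merely technical but are the open core of the problem. First, in the reduction for $\rho_X\geq 2$: smoothness of elementary contractions and nefness of $T_Y$ and $T_F$ do follow from the known structure theory, so induction identifies base and fiber as homogeneous, but your passage from ``homogeneous base and fiber'' to ``$X$ is an associated bundle to a principal bundle, hence homogeneous'' is unproven; deformation rigidity of $G/P$ gives local triviality in some cases (as used in the proof of Theorem~\ref{them:6fold} via Mok's rigidity for a $G/P$-bundle), but in general a smooth fibration with homogeneous base and fibers need not be known to be homogeneous, and closing exactly this step is a substantial part of the CP program rather than a routine lemma. Second, in the $\rho_X=1$ case, the claim that the VMRT $\cC_x$ is again a Fano manifold with nef tangent bundle is not known; smoothness and the embedding $\cC_x\subset\P(\Omega_{X,x})$ are available (as in Lemma~\ref{lem:VMRT}(3), following Hwang), but nefness of $T_{\cC_x}$ does not follow from nefness of $T_X$ by any known argument, so your induction does not close. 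Third, even granting a homogeneous $\cC_x$, the Cartan--Fubini/recognition theorems of Hwang--Mok require identifying $\cC_x\subset\P(T_xX)$ \emph{projectively} with the VMRT of a specific $G/P$, and such recognition is established only for restricted classes of models (and fails to be automatic for degenerate or low-dimensional VMRTs); you give no mechanism to produce this identification, and indeed the paper's own strategy works precisely because the coindex bound replaces all of this by Mukai's explicit classification, after which each candidate is handled individually.
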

This conjecture is called the {\it Campana-Peternell conjecture}. The conjecture was proved in dimension at most three by Campana and Peternell \cite{CP1}, in dimension four by Campana and Peternell \cite{CP2}, Mok \cite{Mk} and J. M. Hwang \cite{Hw4}, and in dimension five by the author \cite{Wa2} and A. Kanemitsu \cite{Kane}. It is also known that the conjecture holds for complete intersections of hypersurfaces by R. Pandharipande \cite{Pan} 
and horospherical varieties by Q. Li \cite{Li2}. However the conjecture is widely open in general. For the recent development of the topic, we refer the reader to \cite{MOSWW}.
 
For a Fano manifold $X$, the {\it index} $i_X$ of $X$ is the maximal integer dividing the anticanonical divisor $-K_X$ in ${\rm Pic}(X)$, while the {\it coindex} of $X$ is defined as $\dim X +1 -i_X$. According to the classification of Fano manifolds with small coindex \cite{KO, Fuj1, Fuj2}, it is easy to check that the Campana-Peternell conjecture is true for Fano manifolds of coindex at most two (see for instance \cite{KO} and \cite[Corollary~5.7]{SW}). The purpose of this paper is to study the conjecture for Fano manifolds of coindex three:

\begin{theorem}\label{MT} 
Let $X$ be a Fano manifold of coindex three. If $X$ has a nef tangent bundle, then $X$ is isomorphic to one of the following:
\begin{enumerate}
\item the $10$-dimensional spinor variety $S_4$;
\item the $8$-dimensional Grassmannian $G(2, 6)$; 
\item the $7$-dimensional symplectic Grassmannian $SG(2, 5)$;
\item the $6$-dimensional symplectic Grassmannian $SG(3, 6)$;
\item $\P^3 \times \P^3$;
\item $\P^2 \times Q^3$; or 
\item $\P(T_{\P^3})$.
\end{enumerate}
In particular, $X$ is homogeneous.
\end{theorem}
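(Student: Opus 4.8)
The plan is to induct on $\dim X$, taking the known validity of the Campana--Peternell conjecture in dimension at most five as the base case, and to split the argument according to the Picard number $\rho_X$. Throughout I would use the structural results for Fano manifolds with nef tangent bundle: every elementary contraction $\pi\colon X\to Y$ is a smooth fibration, and both a fiber $F$ and the target $Y$ are again Fano manifolds with nef tangent bundle. I would also use the relative Euler-type sequence
\[
0 \ra T_{X/Y} \ra T_X \ra \pi^{*}T_Y \ra 0,
\]
together with the standard positivity estimates for nef bundles.

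When $\rho_X\ge 2$, the coindex-three hypothesis becomes a powerful numerical constraint. Restricting $-K_X$ to a fiber gives $-K_F=(-K_X)|_F$, so the index $i_X$ divides $-K_F$ in $\Pic(F)$ and hence $i_F\ge i_X=\dim X-2$. Combining this with the Kobayashi--Ochiai bound $i_F\le \dim F+1$ and $\dim F=\dim X-\dim Y$ forces $\dim Y\le 3$; moreover the fiber has coindex at most $3-\dim Y\le 2$. Thus every fiber is a projective space, a quadric, or a del Pezzo manifold, and the base is a Fano manifold of dimension at most three with nef tangent bundle, hence homogeneous by the low-dimensional cases of the conjecture. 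The remaining task is to identify which smooth fibrations with these fibers and bases actually occur: analyzing the displayed sequence and the nefness of $T_X$, one shows that the fibration must be a product or the specific projectivization dictated by the geometry. Running this over all elementary contractions of $X$ (there are at least two when $\rho_X\ge 2$) pins down $\rho_X=2$ and produces exactly $\P^3\times\P^3$, $\P^2\times Q^3$, and $\P(T_{\P^3})$, giving cases (5)--(7).

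When $\rho_X=1$, there is no contraction to reduce to, and $X$ is a Fano manifold of coindex three with Picard number one. These are the Mukai varieties, which are completely classified. My plan is to invoke this classification and then eliminate each member that is not homogeneous. I would do this through the variety of minimal rational tangents: $X$ is covered by its minimal rational curves of anticanonical degree $i_X$, and nefness of $T_X$ forces its restriction to a general such curve to split without negative summands, which rigidifies the VMRT at a general point to coincide with that of the corresponding homogeneous model. A Cartan--Fubini type recognition theorem then identifies $X$ with that model. The homogeneous Mukai varieties surviving this analysis are precisely $S_4$, $G(2,6)$, $SG(2,5)$, and $SG(3,6)$, yielding cases (1)--(4).

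The main obstacle is the $\rho_X=1$ case. Mukai's classification contains families whose general member is homogeneous but whose special members are not, and the content of the theorem is exactly that nefness of $T_X$ excludes the non-homogeneous degenerations. Turning the heuristic ``$T_X$ nef $\Rightarrow$ VMRT is the homogeneous one'' into a proof --- controlling the splitting type of $T_X$ along minimal rational curves uniformly, establishing smoothness and the correct projective geometry of the VMRT, and then applying the recognition theorem --- is the technical heart of the argument. By contrast, in the $\rho_X\ge 2$ case the fibers and base are homogeneous for free, and the residual work is the comparatively routine numerical bookkeeping together with the verification that the resulting fibration is itself homogeneous.
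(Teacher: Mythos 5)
Your $\rho_X\ge 2$ reduction is fine in outline, but in the paper it is entirely subsumed by Mukai's classification (Theorem~\ref{thm:Mukai}: $\dim X\ge 6$ forces $X\cong\P^3\times\P^3$ or $\rho_X=1$) together with the known cases of the conjecture in dimension at most five, which already account for $\P^2\times Q^3$ and $\P(T_{\P^3})$; so nothing new is needed there. The genuine gap is your $\rho_X=1$ step. Nefness of $T_X$ along a minimal rational curve yields only the splitting $T_X|_{\ell}\cong\cO_{\P^1}(2)\oplus\cO_{\P^1}(1)^{\oplus a}\oplus\cO_{\P^1}^{\oplus b}$, and this does \emph{not} ``rigidify the VMRT to coincide with that of the corresponding homogeneous model''; no such implication is known, and the splitting type is the same for all the candidates in a given genus. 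Worse, for the cases that actually have to be excluded there is often no homogeneous model to recognize: for a codimension-three linear section $X\subset S_4$ the VMRT at \emph{every} point is the quintic del Pezzo threefold $V_5$, which is not homogeneous (it is only quasi-homogeneous under ${\rm PGL}(2)$, with three orbits), so a Cartan--Fubini-type recognition theorem cannot even be formulated, yet nothing in your splitting-type analysis rules this case out. Your mechanism can only confirm that the homogeneous members are what they are; it cannot produce the contradictions that constitute the actual content of the theorem.

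The paper eliminates the non-homogeneous candidates by case-specific obstructions of a quite different nature. Complete intersections (the cases $g=3$(b), $g=4$, $g=5$ of Theorem~\ref{thm:Mukai}) are handled by Pandharipande's theorem. The double-cover cases $g=2$ and $g=3$(a) are treated by computing $\deg c_n(X)$ from the weighted complete intersection presentation and showing the diagonal $\Delta_X$ is not nef: for odd $n$ one has $\deg\Delta_X^2=\deg c_n(X)<0$, and for even $n$ one verifies $\deg c_n(X)>d\deg c_n(Y)$ for the degree-$d$ cover $X\to Y$ and applies Proposition~\ref{prop:LO}. For codimension-two sections of $S_4$ one exhibits a line with non-nef normal bundle, using Kuznetsov's special/non-special dichotomy and, in the non-special case, the blow-up of $X$ along $Z\cong Q^5$ whose exceptional divisor is a projectivized Ottaviani bundle, which forces $N_{Q^5/X}|_{\ell}\cong\cO_{\P^1}(-1)\oplus\cO_{\P^1}(1)^{\oplus 2}$ on a special line $\ell$. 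For codimension-three sections (Proposition~\ref{prop:codim3}) one shows $q:\cU_X\to X$ is a $V_5$-bundle and exploits precisely the \emph{inhomogeneity} of $V_5$: the closed ${\rm PGL}(2)$-orbit, detected fiberwise via the second fundamental form, sweeps out a subvariety $\cW$ carrying two distinct $\P^1$-bundle structures, hence $\cW\cong G/B$ for a rank-two semisimple group, contradicting $\dim X=7$. (The $9$-dimensional section is horospherical and is excluded by Li's result.) To complete your proof you would need arguments of this kind for each non-homogeneous member; the VMRT/recognition route does not supply them.
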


\if0
\begin{theorem}\label{MT} 
Let $X$ be a Fano manifold of coindex three. If $X$ has a nef tangent bundle, then $X$ is homogeneous.
\end{theorem} 
\fi
This paper is organized as follows. In Section~2, we review the classification of Fano manifolds of coindex three and narrow down the possibilities of Fano manifolds of coindex three admitting nef tangent bundle. Consequently, we shall see that it is enough to consider the two cases. The first case is the case when the fundamental linear system $|-\frac{1}{n-2}K_X|$ determines a morphism which is not an embedding. In this case, since $X$ can be described as a weighted complete intersection, we can compute the degree of the top Chern class of $X$. By using the description, we conclude that the tangent bundle of $X$ is not nef via intersection theory of higher-codimensional cycles. This shall be done in Section~3. The second case is the case when $X$ is a smooth linear section of the $10$-dimensional spinor variety $S_4\subset \P^{15}$. Applying previous results, it is sufficient to study the cases $\dim X=8$ and $\dim X=7$. When $\dim X=8$, we prove the existence of a line whose normal bundle in $X$ is not nef.  When $\dim X=7$, we prove the non-nefness of the tangent bundle via the theory of varieties of minimal rational tangents. These shall be done in Section~4. 

\subsection{Notation and Conventions}\label{subsec:NC} Throughout this paper, we work over the complex number field $\C$. We will use the basic notation and definitions in \cite{Har}. 
\begin{itemize}
\item A smooth projective variety is called a {\it projective manifold}.
\item A $k$-dimensional linear subspace of $\P^n$ is called a {\it $k$-plane}.
\item For a projective manifold $X$, we denote the Picard number of $X$ by $\rho_X$.
\item For a projective manifold $F$, an {\it $F$-bundle} means a smooth morphism $f:Y \to X$ between projective manifolds whose fibers are isomorphic to $F$.
\item For a vector bundle $\cE$, $\cE^{\ast}$ denotes its dual vector bundle.
\item For a vector bundle $\cE$, $\P(\cE)$ denotes its projectivization ${\rm Proj}({\rm Sym}\,\cE)$.
\item $Q^n$ denotes a smooth quadric hypersurface in $\P^{n+1}$.
\item $G(k, n)$ denotes the Grassmannian parametrizing $k$-dimensional linear subspaces in $\C^n$.
\item $SG(k, 2n)$ denotes the symplectic Grassmannian parametrizing isotropic $k$-dimensional linear subspaces in $\C^{2n}$ with respect to a fixed symplectic form on $\C^{2n}$.
\item $OG(k, n)$ denotes the orthogonal Grassmannian parametrizing isotropic $k$-dimensional linear subspaces in $\C^{n}$ with respect to a fixed nondegenerate symmetric bilinear form on $\C^{n}$.
\item $S_4 \subset \P^{15}$ denotes the $10$-dimensional spinor variety, which is a connected component of the orthogonal Grassmannian $OG(5, 10)$. The embedding $S_4  \subset \P^{15}$ is given by a half-spinor representation. 
\item $\cS$ denotes the spinor bundle on $Q^{2k+1}$. $\cS'$ and $\cS''$ denote the spinor bundles on $Q^{2k}$. For the definition of spinor bundles, see \cite[Definition~1.3]{Ott}.
\item $V_5$ denotes a $3$-dimensional smooth linear section of $G(2,5) \subset \P^9$ in the Pl\"ucker embedding. By \cite[Theorem~7.6]{Fuj2}, all smooth linear sections of $G(2,5) \subset \P^9$ are projectively equivalent.
\end{itemize}

\section{Preliminaries}

In this section, firstly we review the classification of Fano manifolds of coindex three. By using the classification, secondly we narrow down the possibilities of Fano manifolds of coindex three admitting nef tangent bundle.

A {\it Fano manifold} means a smooth projective variety with ample anticanonical divisor $-K_X$.   
Let $X$ be an $n$-dimensional Fano manifold $X$ of coindex $3$ and $\rho_X=1$. For an ample generator $H$ of ${\rm Pic}(X)$, the self-intersection number $H^n$ is called the {\it degree} of $X$ and the integer $g=g(X):=\dfrac{H^n}{2}+1$ is called the {\it genus} of $X$. 
Let us recall the classification of Fano manifolds of coindex $3$ due to S. Mukai:

\begin{theorem}[{\cite{Mu, Mella}, cf. \cite[Chapter~5]{Isk}}]\label{thm:Mukai} Let $X$ be a Fano manifold of coindex three. If $n:=\dim X \geq 6$, then $X$ is isomorphic to $\P^3 \times \P^3$ or we have $\rho_X=1$. If $\rho_X=1$, then the genus $g$ of $X$ satisfies with $2 \leq g \leq 9$. Furthermore $X$ can be described as follows:
\begin{enumerate} 
\item If $g=2$, then $X$ is a weighted hypersurface of degree $6$ in the weighted projective space $\P(3, 1, \ldots, 1)$. In this case, $X$ is a double cover $\varphi: X \to \P^n$ branched along a sextic.
\item If $g=3$, then $X$ is one of the following:
\begin{enumerate} 
\item $X$ is a weighted complete intersection of type $(4,2)$ in the weighted projective space $\P(2, 1, \ldots, 1)$. In this case, $X$ is a double cover $\varphi: X \to Q^n \subset \P^{n+1}$ branched along the intersection of $Q$ with a quartic hypersurface. 
\item $X \subset \P^{n+1}$ is a quartic hypersurface.
\end{enumerate}
\item If $g=4$, then $X \subset \P^{n+2}$ is a complete intersection of type $(2,3)$.
\item If $g=5$, then $X \subset \P^{n+3}$ is a complete intersection of type $(2,2,2)$.
\item If $g \geq 6$, then $X$ is a linear section of a variety
$$\Sigma^{n(g)}_{2g-2} \subset \P^{g+n(g)-2}
$$ of dimension $n(g)$ and degree $2g-2$, which is one of the following:
\begin{enumerate} 
\item[{($g=6$)}]  $\Sigma^6_{10}\subset \P^{10}$ is a quadric section of the cone over the Grassmannian $G(2, 5)\subset \P^9$ in the Pl\"ucker embedding;
\item[{($g=7$)}] $\Sigma^{10}_{12}= S_4  \subset \P^{15}$ is the $10$-dimensional spinor variety in the embedding induced by the half-spinor representation;
\item[{($g=8$)}] $\Sigma^{8}_{14}= { G}(2,6)  \subset \P^{14}$ is the Grassmannian $G(2, 6)$ in the Pl\"ucker embedding;
\item[{($g=9$)}] $\Sigma^{6}_{16}= SG(3, 6)  \subset \P^{13}$ is the symplectic Grassmannian $SG(3, 6)$ in the Pl\"ucker embedding.
\end{enumerate}
\end{enumerate}
\if0
\begin{notation}
The $10$-dimensional spinor variety $S_4$ is a connected component of the orthogonal Grassmannian of $4$-planes in smooth quadric hypersurfaces $Q^8 \subset \P^9$. The embedding $S_4  \subset \P^{15}$ is given by a half-spinor representation. The symplectic Grassmannian $SG(k, {\ell})$ parametrizes isotropic $k$-planes with respect to a fixed symplectic form $\omega$ on $\C^{\ell}$. 
\end{notation}
\fi
\end{theorem}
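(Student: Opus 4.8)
The plan is to follow Mukai's strategy: reduce the higher-dimensional problem to polarized K3 surfaces through a ladder of hyperplane sections, and then realize $X$ as a section of a model pinned down by the genus. Write $-K_X=(n-2)H$ with $H$ the fundamental divisor. First I would dispose of the case $\rho_X\geq 2$: since $n\geq 6$ forces the index $n-2$ to exceed $n/2$, Wi\'sniewski's classification of Fano manifolds of large index with Picard number at least two applies, and a short analysis of the two extremal contractions (using the lower bound on fibre dimensions in terms of the index) leaves only $\P^3\times\P^3$. Assume henceforth $\rho_X=1$, so that $H$ generates $\Pic(X)$ and $H^n=2g-2$.

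Next I would construct a smooth ladder $X=X_n\supset X_{n-1}\supset\cdots\supset X_2=S$ with $X_{k-1}\in\abs{H|_{X_k}}$ smooth and irreducible, terminating in a K3 surface $S$ with $(H|_S)^2=2g-2$; by adjunction each $X_k$ is again Fano of coindex three (the index dropping by one at each step), which makes this the inductive backbone, and one step lower sits a canonical curve of genus $g$. The delicate input here --- base-point-freeness of $\abs{H}$ and the existence of a smooth member --- rests on Kawamata--Viehweg vanishing (which also yields $H^i(X,H)=0$ for $i>0$ and the surjectivity of the restriction maps along the ladder) together with Bertini, and this is exactly the point at which Mella's refinements \cite{Mella} are needed. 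Climbing the ladder from $h^0(S,H)=g+1$ gives $h^0(X,H)=g+n-1$, embedding $X$ (when $\abs{H}$ is very ample) in $\P^{g+n-2}$; the bound $g\geq 2$ comes from $(H|_S)^2\geq 2$ on the K3, while Mukai's bound gives $g\leq 10$ with the exceptional $g=10$ model of dimension five, so that for $n\geq 6$ only $2\leq g\leq 9$ survive.

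For $2\leq g\leq 5$ I would read the homogeneous ideal of $X$ off the Hilbert function: Koszul cohomology together with the vanishing $H^i(X,mH)=0$ for $i>0$ fixes the degrees of the generators, producing a weighted sextic in $\P(3,1,\ldots,1)$ for $g=2$, a weighted complete intersection of type $(4,2)$ or a quartic in $\P^{n+1}$ for $g=3$, a complete intersection of type $(2,3)$ for $g=4$, and one of type $(2,2,2)$ for $g=5$; the failure of very-ampleness in the first two situations is precisely what forces the double-cover descriptions.

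The heart of the argument is the range $g\geq 6$. Here I would produce the Mukai vector bundle $E$ on $X$: beginning from a Lazarsfeld--Mukai bundle on $S$ attached to a Brill--Noether-special linear series on the curve section, I would propagate it rigidly up the ladder using the vanishing theorems, obtaining a globally generated $E$ of rank $r=\rank E$ with $V:=H^0(X,E)^{\ast}$ whose evaluation gives a morphism $X\to G(r,V)$ induced by a sublinear system of $\abs{H}$; comparing dimension and degree then identifies the span of the image with the homogeneous model $\Sigma^{n(g)}_{2g-2}$ --- the quadric section of the cone over $G(2,5)$ for $g=6$, the spinor variety $S_4$ for $g=7$, $G(2,6)$ for $g=8$, and $SG(3,6)$ for $g=9$ --- and exhibits $X$ as a transverse linear section. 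I expect this construction to be the main obstacle: showing that the special bundle exists on the K3 section, that it extends without jumping over the entire ladder, and that the resulting Grassmannian map is a genuine linear-section realization rather than merely a birational one requires the full strength of Mukai's Brill--Noether theory for K3 surfaces \cite{Mu} and careful cohomological control of $E$, and it is again where the non-general sections treated in \cite{Mella} must be kept under control.
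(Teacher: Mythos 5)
Your outline is correct and coincides with the argument of the sources the paper itself cites for this statement (the paper gives no proof of Theorem~\ref{thm:Mukai}, quoting it from Mukai \cite{Mu}, Mella \cite{Mella} and Iskovskikh--Prokhorov \cite{Isk}): Wi\'sniewski's large-index classification for the $\rho_X\geq 2$ case, the ladder of fundamental divisors down to a K3 surface with Mella supplying the existence of smooth members, Hilbert-function/ideal-theoretic arguments for $2\leq g\leq 5$, and Mukai's Brill--Noether-theoretic vector-bundle construction realizing the $g\geq 6$ cases as linear sections of the homogeneous models. You also correctly locate the two genuinely hard inputs --- the smooth fundamental divisor (Mella) and the rigid extension of the Lazarsfeld--Mukai bundle up the ladder (Mukai) --- so there is nothing to correct.
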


By using the above classification, let us start to study when Fano manifolds of coindex three admit nef tangent bundle. Firstly we define:

\begin{definition}[{\cite[Definition 1.4]{MOSWW}}] A Fano manifold $X$ with nef tangent bundle is called a {\it CP manifold}. 
\end{definition}

\begin{theorem}[{\cite{CP1, CP2, Hw4, Kane, Mk, Wa2, Pan}}]\label{thm:CP5}
The Campana-Peternell conjecture \ref{conj:CP} holds for CP manifolds of dimension at most five and complete intersections of hypersurfaces in a projective space.
\end{theorem}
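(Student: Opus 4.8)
The plan is to assemble the statement entirely from the cited literature: its content is that each dimension range, and separately the complete-intersection hypothesis, has already been settled in a distinct paper, so there is no unified argument to reproduce—only a case division to organize and a check that the union of the quoted results covers exactly the two stated hypotheses. Since the Campana–Peternell conjecture \ref{conj:CP} asserts homogeneity of a Fano manifold with nef tangent bundle, I would simply verify homogeneity regime by regime.

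First I would dispose of the low-dimensional cases in increasing dimension. For $\dim X \le 3$, homogeneity of a Fano manifold with nef tangent bundle was established by Campana and Peternell in \cite{CP1}, via the structure of extremal contractions on Fano threefolds. For $\dim X = 4$ the conjecture is known by the independent works of Campana and Peternell \cite{CP2}, Mok \cite{Mk}, and Hwang \cite{Hw4}; any one of these gives the statement, so I would cite it directly. For $\dim X = 5$ the conjecture was proved by the author \cite{Wa2} and by Kanemitsu \cite{Kane}. Together these three inputs cover all CP manifolds of dimension at most five, with no restriction on the Picard number $\rho_X$.

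For the complete-intersection case, I would invoke Pandharipande's theorem \cite{Pan}, which shows that the only smooth complete intersections of hypersurfaces in a projective space carrying a nef tangent bundle are the homogeneous ones—namely projective spaces and quadrics. Combining this with the three dimension inputs yields the full statement. The only genuine task, and thus the main point to be careful about, is the bookkeeping: one must confirm that the two hypotheses in the theorem (dimension at most five, and complete intersection in $\P^N$) are covered \emph{without gaps} by the union of the cited classifications—in particular that the dimension-five results are stated for arbitrary $\rho_X$ and not merely for $\rho_X = 1$, and that Pandharipande's conclusion is homogeneity in every admissible degree. Because each reference is a complete classification within its own regime, no further difficulty arises, and the theorem follows by collation.
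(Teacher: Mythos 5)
Your proposal is correct and matches the paper exactly: the paper offers no proof of this theorem, treating it purely as a collation of the cited results---\cite{CP1} for $\dim X \le 3$, \cite{CP2, Mk, Hw4} for $\dim X = 4$, \cite{Wa2, Kane} for $\dim X = 5$, and \cite{Pan} for complete intersections---which is precisely your case division. Your added bookkeeping remarks (arbitrary $\rho_X$ in dimension five; Pandharipande's classification yielding only projective spaces and quadrics, hence homogeneity) are accurate and consistent with how the paper uses the theorem.
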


Recall that, for a Fano manifold $X$, the {\it pseudoindex} $\iota_X$ of $X$ is the minimal anticanonical degree of rational curves on $X$. Thanks to results of Kanemitsu \cite{Kane3, Kane}, we can obtain the following theorem. All of the ideas of the proof are contained in \cite{Kane3, Kane}, but this result is not specified in these papers. For the reader's convenience, we present a proof.

\begin{theorem}\label{them:6fold} Let $X$ be a CP $6$-fold of $\rho_X=1$ and $\iota_X=4$. Then $X$ is homogeneous.
\end{theorem}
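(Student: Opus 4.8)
The plan is to analyze a dominating family $\cM$ of minimal rational curves on $X$ together with its variety of minimal rational tangents (VMRT), and then to appeal to the VMRT reconstruction theory in the spirit of \cite{Kane3, Kane}. Since $X$ is a CP $6$-fold with $\rho_X=1$, it is Fano and rationally connected, so it carries a dominating family $\cM$ whose general member $\ell$ is a standard (free) rational curve with $-K_X\cdot\ell=\iota_X=4$. Because on a CP manifold of Picard number one the minimal rational curves are lines, one has $i_X=\iota_X=4$, so $X$ is of coindex three; in any case, for a general point $x\in X$ the tangent map identifies the VMRT $\cC_x$ with a subvariety of $\P(T_xX)\cong\P^5$, and by the Hwang--Mok theory combined with the nefness of $T_X$ this $\cC_x$ is a smooth irreducible surface, of dimension $\iota_X-2=2$.

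The first main step is to identify the embedded VMRT $\cC_x\subset\P^5$. By the results of Kanemitsu the nefness of $T_X$ descends to $\cC_x$, so $\cC_x$ is a smooth del Pezzo surface with nef tangent bundle, hence isomorphic to $\P^2$ or $\P^1\times\P^1$. I would then pin down the projective embedding by studying its linear span and its projective second fundamental form: the cases in which $\cC_x$ is linearly degenerate, or is realized as a Segre or scroll embedding, should be incompatible with $\cC_x$ being the VMRT of a $6$-dimensional Fano manifold of Picard number one, leaving only $\cC_x=v_2(\P^2)$, the Veronese surface. This is precisely the VMRT of the symplectic (Lagrangian) Grassmannian $SG(3,6)$.

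Once the VMRT is shown to be projectively equivalent to that of $SG(3,6)$, the final step is to invoke the Cartan--Fubini type recognition theorem for rational homogeneous spaces of Picard number one (Mok; Hwang--Hong) to conclude $X\cong SG(3,6)$, which is homogeneous, as desired.

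The hard part is the middle step: rigorously excluding every embedded surface other than $v_2(\P^2)$ as a candidate VMRT, and then verifying the hypotheses under which the reconstruction theorem applies. This is exactly the portion where the detailed analysis of VMRTs of CP manifolds and of the associated prolongations carried out in \cite{Kane3, Kane} is indispensable; the remaining steps are essentially assembly.
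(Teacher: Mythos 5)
Your outline has a genuine gap at its central step: the claim that the nefness of $T_X$ ``descends'' to the VMRT, so that $\cC_x$ is a del Pezzo surface with nef tangent bundle, is not a known theorem, and it is not contained in \cite{Kane3} or \cite{Kane}. What is available for CP manifolds is that minimal rational curves are unobstructed and the tangent map embeds $\cM_{X,x}$ into $\P(\Omega_{X,x})$, so the VMRT is smooth; no descent-of-nefness (let alone Fano-ness) statement for $\cC_x$ exists in the literature. Indeed, if such a descent held and could be combined with recognition theorems, the Campana--Peternell conjecture would largely reduce to an induction on dimension --- the absence of exactly this tool is why the conjecture is hard. Even granting nefness, you would still have to exclude the non-Fano surfaces with nef tangent bundle (abelian and bielliptic surfaces, $\P^1$-bundles over elliptic curves), since nothing a priori makes $\cC_x$ rationally connected; and the elimination of every embedded candidate other than $v_2(\P^2)\subset\P^5$ (linearly degenerate planes, projected Veroneses, the degree-$2$ and degree-$4$ embeddings of $\P^1\times\P^1$) is precisely the part you defer with ``should be incompatible.'' You also assert $i_X=\iota_X$, which is unjustified ($\iota_X=4$ only gives $i_X\in\{1,2,4\}$), and irreducibility of the minimal family at a general point needs an argument. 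Your final step is sound in principle: $SG(3,6)$ is an irreducible Hermitian symmetric space of rank $3$ with VMRT $v_2(\P^2)\subset\P^5$, so Mok's recognition theorem would apply \emph{once} the embedded VMRT is identified --- but that identification is exactly what is missing.

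For comparison, the paper's proof avoids classifying the VMRT altogether. It invokes Kanemitsu's structure theorem (\cite[Proposition~4.1, Theorem~3.3]{Kane}): a CP $6$-fold with $\rho_X=1$ and $\iota_X=4$ is either $SG(3,6)$ or fits into a diagram with three $\P^1$-bundles $f,g,\pi$ and two smooth elementary contractions $p,q$. In the second case, the fiber $U_y$ over $y\in Y$ carries two distinct $\P^1$-bundle structures, hence is $G/B$ for a simple group $G$ of rank two by \cite[Theorem~A.1]{MOSWW}; rigidity of homogeneous fibrations makes $q\colon W\to Y$ a $G/P$-bundle, and \cite[Theorem~0.2]{Kane3} forces $W$ to be homogeneous or a projectivized Ottaviani bundle, the latter being excluded because it would give $X\cong Q^5$, contradicting $\iota_X=4$. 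So the two routes are genuinely different --- yours is a direct VMRT-recognition argument, the paper's is a contraction-theoretic dichotomy --- but as written your middle step rests on an unproven (and essentially open) descent statement, so the proposal does not constitute a proof.
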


\begin{proof}  Applying \cite[Proposition~4.1, Theorem~3.3]{Kane}, any CP $6$-fold of $\rho_X=1$ and $\iota_X=4$ satisfies one of the following;
\begin{enumerate}
\item $X$ is isomorphic to the symplectic Grassmannian $SG(3, 6)$ or 
\item $X$ admits the following commutative diagram
\[\xymatrix{
U \ar[r]^{f} \ar[d]^{\pi}  &  W \ar[d]^{q}  \ar[r]^{g} & X  \\
V \ar[r]^{p} & Y \\
} \]  with the properties: 
\begin{enumerate}
\item $f, g$ and $\pi$ are $\P^1$-bundles, and
\item $p$ and $q$ are smooth contractions of an extremal ray. 
\end{enumerate}
\end{enumerate}
Since the symplectic Grassmannian $SG(3, 6)$ is homogeneous, we only need to consider the second case. 
In this case, for any point $y \in Y$, let us denote by $V_y, W_y$ and $U_y$, respectively, the fibers of $y$ by $p, q$ and $p\circ \pi=q \circ f$. Since $\rho_X=1$ and any contraction of an extremal ray as in the above diagram is a smooth morphism, we have $\rho_{V_y}=1, \rho_{W_y}=1$ and $\rho_{U_y}=2$. This means that ${U_y}$ is a Fano manifold with two different $\P^1$-bundle structures. By \cite[Theorem~A.1]{MOSWW}, $U_y$ is isomorphic to $G/B$, where $G$ is a simple linear algebraic group of rank $2$ and $B$ is its Borel subgroup. It turns out that $W_y$ is $G/P$ for some parabolic subgroup $P$ of $G$. By the rigidity results on rational homogeneous manifolds \cite[Theorem~3.3.2]{Mok}, $q: W \to Y$ is a $G/P$-bundle. Then, applying \cite[Theorem~0.2]{Kane3} to $W$, we see that $W$ is homogeneous or it is isomorphic to the projectivized Ottaviani bundle $\P(\cG_o)$ in the sense of Kanemitsu \cite[Definition~2.1]{Kane3} (see Definition~\ref{def:ott} below). If $W$ is the projectivized Ottaviani bundle $\P(\cG_o)$, then $X$ is $Q^5$. This is a contradiction, because we assume $\iota_X=4$ but $\iota_{Q^5}=5$. So we may assume $W$ is homogeneous. Since the target of any contraction of any rational homogeneous manifold is again homogeneous, our assertion holds. 
\end{proof}

\begin{corollary}\label{cor:CP6} Let $X$ be a CP $6$-fold of coindex three. Then $X$ is homogeneous.
\end{corollary}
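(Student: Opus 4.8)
The plan is to verify that $X$ meets the hypotheses of Theorem~\ref{them:6fold}, after which homogeneity is immediate; the only case not covered by that theorem is handled separately and is visibly homogeneous. The first step is to compute the index: since $X$ has coindex three and $\dim X = 6$, the defining relation $\dim X + 1 - i_X = 3$ gives $i_X = 4$. Thus the two invariants appearing in Theorem~\ref{them:6fold}, namely $\rho_X$ and $\iota_X$, are what I must control, and both will be pinned down using Mukai's classification together with a standard bound on the pseudoindex.

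Next I would apply Theorem~\ref{thm:Mukai} in the range $n = \dim X = 6 \geq 6$: either $X \cong \P^3 \times \P^3$, or $\rho_X = 1$. In the first case $X$ is a product of projective spaces and is therefore homogeneous, so nothing further is needed. Assume then that $\rho_X = 1$ and write $\Pic(X) = \Z H$ with $H$ ample, so that $-K_X = i_X H = 4H$. For any rational curve $\ell$ we have $H \cdot \ell \geq 1$, whence $-K_X \cdot \ell = 4\,(H \cdot \ell)$ is a positive multiple of $4$; taking the minimum over rational curves shows that $\iota_X$ is a positive multiple of $4$, in particular $\iota_X \geq 4$. Combining this with the general bound $\iota_X \leq \dim X + 1 = 7$ valid for every Fano manifold, and noting that $4$ is the only multiple of $4$ in the interval $[4,7]$, I conclude $\iota_X = 4$. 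With $\rho_X = 1$ and $\iota_X = 4$ established, Theorem~\ref{them:6fold} applies directly and gives that $X$ is homogeneous.

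I do not expect a serious obstacle here: once Theorem~\ref{them:6fold} is available, the corollary is essentially a bookkeeping argument that translates the coindex hypothesis into the numerical data $i_X = 4$, and then promotes this to $\iota_X = 4$ via the arithmetic constraint $\iota_X \in 4\Z$ on a Picard-rank-one Fano manifold. The single external input is the uniform pseudoindex bound $\iota_X \leq \dim X + 1$, which is what forces $\iota_X = 4$ rather than a larger multiple of $4$; everything else follows from the classification in Theorem~\ref{thm:Mukai} and the hypotheses of Theorem~\ref{them:6fold}.
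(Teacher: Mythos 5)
Your proposal is correct and follows essentially the same route as the paper: Mukai's classification (Theorem~\ref{thm:Mukai}) gives the dichotomy $\P^3 \times \P^3$ or $\rho_X = 1$, and in the latter case one establishes $\iota_X = 4$ and invokes Theorem~\ref{them:6fold}. The only difference is in the one step pinning down the pseudoindex: the paper extracts a rational curve $C$ with $-K_X \cdot C = 4$ directly from the classification, whereas you combine the divisibility $\iota_X \in 4\Z$ (from $-K_X = 4H$ with $\rho_X = 1$) with the general Mori bound $\iota_X \leq \dim X + 1 = 7$ --- a valid and slightly more self-contained variant that avoids checking the classified families for low-degree curves.
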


\begin{proof} By Theorem~\ref{thm:Mukai}, $X$ is isomorphic to $\P^3 \times \P^3$ or $\rho_X=1$. Since $\P^3 \times \P^3$ is homogeneous, we may assume $\rho_X=1$. In this case, it follows from the classification Theorem~\ref{thm:Mukai} that $X$ admits a rational curve $C$ on $X$ such that $-K_X\cdot C=4$. Hence the pseudoindex $\iota_X$ is equal to $4$. Then our claim follows from Theorem~\ref{them:6fold}. 
\end{proof}

\begin{proposition}[The case $g=8$]\label{prop:g=8} Let $X$ be a smooth linear section of the Grassmannian ${G}(2, 6)  \subset \P^{14}$ in the Pl\"ucker embedding. If the tangent bundle of $X$ is nef, then $X$ is isomorphic to the Grassmannian ${G}(2, 6)$ or the symplectic Grassmannian $SG(2, 6)$.
\end{proposition}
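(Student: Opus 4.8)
The plan is to read the dimension of $X$ off the codimension of the linear section and then play the already-settled low-dimensional cases against the genus (equivalently, the degree) of $X$. Write $X=G(2,6)\cap\Lambda$ for a linear subspace $\Lambda\subseteq\P^{14}$ of codimension $c$, so that $\dim X=8-c$. Since taking a hyperplane section preserves the degree, every such $X$ has degree $\deg G(2,6)=14$ and hence genus $g(X)=\tfrac{14}{2}+1=8$; moreover, as $X$ is a coindex-three Fano manifold its index $6-c$ is positive, so $0\le c\le 5$ and $3\le\dim X\le 8$. I would treat $c=0$, $c=1$ and $c\ge 2$ in turn, using the nefness hypothesis only in the last range.

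The case $c=0$ is immediate: $X=G(2,6)$. For $c=1$ the section is cut out by a single alternating form $\omega\in\wedge^2(\C^6)^{\ast}$, namely $X=\{[W]\in G(2,6):\omega|_W=0\}$. The projective dual of $G(2,6)\subset\P^{14}$ is the Pfaffian cubic hypersurface parametrizing the degenerate forms, so the hyperplane $H_\omega$ is tangent to $G(2,6)$ exactly when $\omega$ is degenerate; thus $X$ is smooth if and only if $\omega$ is nondegenerate. All nondegenerate alternating forms on $\C^6$ lie in a single $GL_6$-orbit, so every smooth hyperplane section is projectively equivalent to a fixed one, namely the symplectic Grassmannian $SG(2,6)$ (this is the exact analogue of the statement for $G(2,5)$ recalled in Subsection~\ref{subsec:NC}). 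Hence $X\cong SG(2,6)$, which is homogeneous and therefore does have nef tangent bundle.

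It is only when $c\ge 2$, i.e. $\dim X\le 6$, that the hypothesis on $T_X$ enters. Here I would invoke the solved instances of the conjecture: by Theorem~\ref{thm:CP5} every CP manifold of dimension at most five is homogeneous, and by Corollary~\ref{cor:CP6} every CP six-fold of coindex three is homogeneous. Thus nefness of $T_X$ would force $X$ to be a rational homogeneous manifold of Picard number one, coindex three, degree $14$ and dimension at most six. But by the classification of rational homogeneous spaces, the homogeneous Fano manifolds of Picard number one and coindex three of dimension at most six are the five-dimensional $G_2$-variety and the Lagrangian Grassmannian $SG(3,6)$, of degrees $18$ and $16$ respectively (and there are none in dimensions three and four); none of these has degree $14$. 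This contradiction rules out $c\ge 2$, and combining the three ranges yields $X\cong G(2,6)$ or $X\cong SG(2,6)$.

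Granting the already-established low-dimensional cases (Theorem~\ref{thm:CP5} and Corollary~\ref{cor:CP6}), the argument is essentially bookkeeping, and the two points needing care are both standard: the identification of the smooth hyperplane sections with $SG(2,6)$ via projective duality, and the verification that no section of dimension at most six is homogeneous. The latter is the step I expect to be the real (if not deep) obstacle, since it is exactly what excludes an accidental isomorphism between a small linear section of $G(2,6)$ and a homogeneous model such as $SG(3,6)$. I would settle it by the numerical invariant above, using that the degree $14$ common to all linear sections of $G(2,6)$ distinguishes them from every homogeneous coindex-three Fano manifold of Picard number one and dimension at most six.
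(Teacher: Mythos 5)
Your argument is correct and follows the same skeleton as the paper's proof: the case $c=0$ is trivial, smooth hyperplane sections are identified with $SG(2,6)$, and the sections of codimension at least two are excluded via Theorem~\ref{thm:CP5} and Corollary~\ref{cor:CP6}. You differ in two local respects, both to the good. First, where the paper simply cites \cite[Theorem~1.1~(1)]{saka} for the identification of a smooth hyperplane section with $SG(2,6)$, you reprove it via projective duality: the dual variety of $G(2,6)\subset\P^{14}$ is the Pfaffian cubic of degenerate $2$-forms, so smoothness of the section forces the defining form $\omega$ to be nondegenerate, and all nondegenerate forms lie in one $GL_6$-orbit; this is a standard and correct substitute for the citation. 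Second, and more substantively, the paper's phrase ``we may assume $X$ is $G(2,6)$ or its smooth hyperplane section'' silently uses the fact that no section of dimension at most six can be a CP manifold: Theorem~\ref{thm:CP5} and Corollary~\ref{cor:CP6} only say such an $X$ would be \emph{homogeneous}, and one must still rule out an accidental isomorphism with a homogeneous model. You make this explicit by numerical bookkeeping: every linear section has degree $14$ (genus $8$) with respect to the ample generator, which by Grothendieck--Lefschetz generates $\operatorname{Pic}(X)\cong\Z$ for $\dim X\ge 3$, so degree and coindex are intrinsic; the only homogeneous Picard-number-one coindex-three Fano manifolds in dimensions five and six are $K(G_2)$ and $SG(3,6)$, of degrees $18$ and $16$, and there are none in dimensions three and four. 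This comparison is exactly the step the paper leaves implicit, so your proposal not only matches the paper's route but also closes its one small expository gap.
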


\begin{proof} According to Theorem~\ref{thm:CP5} and Corollary~\ref{cor:CP6}, we may assume $X$ is the Grassmannian ${G}(2, 6)$ or its smooth hyperplane section. Since any smooth hyperplane section of the Grassmannian ${G}(2, 6)  \subset \P^{14}$ is the symplectic Grassmannian $SG(2, 6)$ (see for instance \cite[Theorem~1.1~(1)]{saka}), our assertion holds.  
\end{proof}

\begin{remark}\label{rem:cases}
Combining Theorem~\ref{thm:CP5}, Corollary~\ref{cor:CP6} and Proposition~\ref{prop:g=8} with Theorem~\ref{thm:Mukai}, to prove Theorem~\ref{MT} it is enough to study the following two cases of $\rho_X=1$:
\begin{enumerate} 
\renewcommand{\labelenumi}{(\roman{enumi})}
\item the fundamental linear system $|-\frac{1}{n-2}K_X|$ determines a morphism which is not an embedding ($X$ is one of the varieties as in (1) and (2)-(a) of Theorem~\ref{thm:Mukai});
\item $g=7$ and $7 \leq \dim X \leq 9$.
\end{enumerate} 
\end{remark}

We shall deal with the case $\rm (i)$ in Section~\ref{sec:g=2,3} and the case $\rm (ii)$ in Section~\ref{sec:g=7}.  

\section{The cases where $g=2$ and $3$}\label{sec:g=2,3}

We study the cases (1) and (2)-(a) of Theorem~\ref{thm:Mukai} in a similar way to \cite[Section~4]{SW}. The following results were obtained during my joint work with T. Suzuki \cite{SW}. We begin with a quick review of some intersection theory of cycles.  

For the intersection theory, we refer the reader to \cite{Ful}. Let $X$ be a projective manifold of dimension $n$.  A {\it $k$-cycle} on $X$ means a finite formal linear combination $\sum a_i [Z_i]$ where $Z_i$ are $k$-dimensional closed subvarieties of $X$ and $a_i$ are real numbers. The cycle $\sum a_i [Z_i]$ is said to be {\it nef} if it has non-negative intersection numbers with all $k$-dimensional closed subvarieties.  We denote by $A_k(X)$ the group of $k$-cycles with real coefficients modulo rational equivalence on $X$. Then we have the degree homomorphism $\deg: {A}_0(X)\to \R$ (see \cite[Definition~1.4]{Ful}). 
Let $E$ be a vector bundle of rank $r$ on $X$. For each $k=0, 1, \ldots, r$, the {\it $k$-th Chern class} $c_k(E) \in A_k(X)$ is defined by the relations
$$\sum_{k=0}^r(-1)^k\pi^{\ast}c_k(E)\xi^{r-k}=0~\text{and}~c_0(E)=1,
$$
where $\xi$ is the divisor associated to the tautological line bundle $\cO_{\P(E)}(1)$ and $\pi: \P(E) \to X$ is the natural projection. We define the Chern class $c_r(X)$ of $X$ as the Chern class $c_r(T_X)$.

For a projective manifold $X$, the diagonal $\Delta_X \subset X \times X$ is said to be {\it nef} if $\Delta_X$ is nef as a cycle on $X \times X$. Remark that $\Delta_X$ is nef, provided that $T_X$ is nef. Thus, to prove $T_X$ is not nef for a given manifold $X$, it is sufficient to prove $\Delta_X$ is not nef. We review the following two results:

\begin{proposition}[{\cite[Proposition~8.1.12]{Ful}}]\label{prop:diag^2} For a projective manifold $X$, let $\Delta_X:=\{(x,x)\mid x \in X\} \subset X \times X$ be the diagonal of $X$. Then 
$$\deg \Delta_X^2= \deg c_n(X).
$$ 
\end{proposition}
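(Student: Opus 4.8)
The plan is to compute the self-intersection of the diagonal by means of the self-intersection formula of intersection theory, after identifying the relevant normal bundle with the tangent bundle of $X$. Write $\delta\colon X \to X\times X$, $x\mapsto (x,x)$, for the diagonal embedding, so that $\Delta_X=\delta(X)$ and $\delta_{\ast}[X]=[\Delta_X]$. Since $\dim X=n$ and $\dim(X\times X)=2n$, the map $\delta$ is a regular closed embedding of codimension $n$, and the self-intersection class $\Delta_X^2=[\Delta_X]\cdot[\Delta_X]$ is a $0$-cycle on $X\times X$, so that $\deg\Delta_X^2$ is well defined.

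First I would identify the normal bundle $N:=N_{\Delta_X/X\times X}$. Denoting by $p_1,p_2\colon X\times X\to X$ the two projections, we have $T_{X\times X}=p_1^{\ast}T_X\oplus p_2^{\ast}T_X$, hence $\delta^{\ast}T_{X\times X}\cong T_X\oplus T_X$. The differential of $\delta$ embeds $T_X$ as the diagonal subbundle $\{(v,v)\}$, so the normal bundle exact sequence reads
$$0 \ra T_X \ra T_X\oplus T_X \ra N \ra 0,$$
and the quotient $N$ is identified with $T_X$ (for instance via $(v,w)\mapsto w-v$). Thus $N\cong T_X$.

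Next I would invoke the self-intersection formula: for a regular embedding $\delta$ of codimension $n$ with normal bundle $N$, one has $\delta^{\ast}\delta_{\ast}[X]=c_n(N)\cap[X]$ in $A_0(X)$. Combining this with the projection formula $\delta_{\ast}[X]\cdot\delta_{\ast}[X]=\delta_{\ast}\big(\delta^{\ast}\delta_{\ast}[X]\big)$ and using that proper pushforward preserves the degree of $0$-cycles, I obtain
$$\deg\Delta_X^2=\deg\big(\delta_{\ast}(c_n(N)\cap[X])\big)=\deg\big(c_n(N)\cap[X]\big)=\deg\big(c_n(T_X)\cap[X]\big)=\deg c_n(X),$$
where the final equality is the definition of $c_n(X)$.

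The main technical point—rather than a genuine obstacle—is the correct formulation and use of the self-intersection formula. The cleanest route is to realize $\Delta_X^2$ as $\delta^{!}[\Delta_X]$ via the refined Gysin homomorphism attached to the regular embedding $\delta$, whose compatibility with Chern classes of the normal bundle yields $\delta^{!}\delta_{\ast}[X]=c_n(N)\cap[X]$; the identification $N\cong T_X$ then does the rest. Everything else is bookkeeping with dimensions and with the invariance of $\deg$ under proper pushforward of $0$-cycles.
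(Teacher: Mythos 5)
Your proposal is correct and coincides with the paper's approach: the paper offers no independent argument, citing Fulton's Proposition~8.1.12, and that reference proves the statement exactly as you do, via the identification $N_{\Delta_X/X\times X}\cong T_X$ from the exact sequence $0 \to T_X \to T_X\oplus T_X \to N \to 0$ together with the self-intersection formula $\delta^{\ast}\delta_{\ast}[X]=c_n(N)\cap [X]$ and the invariance of the degree of $0$-cycles under proper pushforward. There is nothing to correct.
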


\if0
\begin{remark}\label{rem:euler}
Let $X$ be a complex projective manifold. By the Gauss-Bonnet theorem (see for instance \cite[P. 70, Chapter~I, Theorem~4.10.1]{Hir}), the degree of the top Chern class of $X$ is nothing but the topological Euler characteristic: $$\deg c_n(X)=\sum_{i} (-1)^{i}h^i(X^{an}, \C),$$ where $X^{an}$ is the complex manifold associated to $X$.
\end{remark}
\fi

\begin{proposition}[{\cite[Proposition~4.4]{LO}}]\label{prop:LO} Let $X$ be a projective manifold of dimension $n$ admitting a surjective generically finite morphism $f :X \to Y$  of degree $d$ to a projective manifold $Y$. Suppose that $\deg c_n(X)>d \deg c_n(Y)$. Then $\Delta_X$ is not nef.
\end{proposition}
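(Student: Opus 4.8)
The plan is to show that $\Delta_X$ fails the defining property of a nef cycle, that is, to exhibit an $n$-dimensional subvariety $Z \subset X \times X$ with $\deg(\Delta_X \cdot Z) < 0$. The natural source of such a cycle is the fibre product $W := X \times_Y X = (f \times f)^{-1}(\Delta_Y) \subset X \times X$, where $f \times f : X \times X \to Y \times Y$ and $\Delta_Y \subset Y \times Y$ is the diagonal. Since $Y$ is smooth, $\Delta_Y$ is regularly embedded of codimension $n$, so the Gysin pullback $(f \times f)^{\ast}[\Delta_Y]$ is a well-defined $n$-cycle class supported on $W$; because $f$ is generically unramified (we are in characteristic $0$), the diagonal $\Delta_X$ occurs in it with multiplicity exactly $1$, and I would write $(f \times f)^{\ast}[\Delta_Y] = [\Delta_X] + [R]$ with $R$ an effective residual $n$-cycle, namely the closure of the off-diagonal part of $W$.

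The heart of the argument is a two-way computation of $\deg\big(\Delta_X \cdot (f\times f)^{\ast}[\Delta_Y]\big)$. On one hand, the projection formula applied to the proper morphism $f \times f$ gives
\[
\deg\big(\Delta_X \cdot (f\times f)^{\ast}[\Delta_Y]\big) = \deg\big((f\times f)_{\ast}[\Delta_X]\cdot [\Delta_Y]\big) = d\,\deg(\Delta_Y^2) = d\,\deg c_n(Y),
\]
where I use that $f\times f$ carries $\Delta_X \cong X$ onto $\Delta_Y \cong Y$ by (a copy of) $f$, so $(f\times f)_{\ast}[\Delta_X] = d[\Delta_Y]$, together with Proposition~\ref{prop:diag^2} for $Y$. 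On the other hand, substituting the decomposition and invoking Proposition~\ref{prop:diag^2} for $X$,
\[
\deg\big(\Delta_X \cdot (f\times f)^{\ast}[\Delta_Y]\big) = \deg\Delta_X^2 + \deg(\Delta_X \cdot R) = \deg c_n(X) + \deg(\Delta_X \cdot R).
\]
Equating the two expressions yields $\deg(\Delta_X \cdot R) = d\,\deg c_n(Y) - \deg c_n(X)$, which is strictly negative by hypothesis. As $R = \sum_i a_i[Z_i]$ is effective with each $a_i > 0$, at least one component must satisfy $\deg(\Delta_X \cdot Z_i) < 0$, and this $Z_i$ witnesses that $\Delta_X$ is not nef. (Equivalently, one argues by contraposition: if $\Delta_X$ were nef, then $\deg(\Delta_X\cdot R)\ge 0$ since $R$ is effective, forcing $\deg c_n(X)\le d\,\deg c_n(Y)$.)

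The step requiring the most care is the claim that $(f\times f)^{\ast}[\Delta_Y]$ equals $[\Delta_X]$ plus an \emph{effective} cycle, with the coefficient of $\Delta_X$ equal to $1$. When $f$ is finite this is clean: $X,Y$ smooth and $f$ finite surjective force $f$ to be flat by miracle flatness, hence $f\times f$ is flat and $(f\times f)^{\ast}[\Delta_Y]=[W]$ is literally the cycle of the scheme-theoretic fibre product, manifestly effective, with $\Delta_X$ appearing with multiplicity $1$ by generic étaleness; this already covers the applications, where $f$ is a double cover. For a merely generically finite $f$, the fibre product $W$ may acquire components of dimension $>n$ over the locus where $f$ is not finite, and one must check, via the refined Gysin homomorphism for the regular embedding $\Delta_Y \hookrightarrow Y\times Y$, that the resulting $n$-dimensional class is still $[\Delta_X]+[R]$ with $R$ effective. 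Verifying this effectivity over the non-finite locus is the only genuinely delicate point; everything else reduces to the projection formula and Proposition~\ref{prop:diag^2}.
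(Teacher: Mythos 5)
Your argument reconstructs exactly the proof of the cited source: the paper itself gives no proof of Proposition~\ref{prop:LO}, quoting it from \cite[Proposition~4.4]{LO}, and that proof is precisely your computation --- pull back $\Delta_Y$ under $f\times f$, split off $\Delta_X$ with multiplicity one (generic \'etaleness in characteristic zero), and play $\deg \Delta_X^2=\deg c_n(X)$ from Proposition~\ref{prop:diag^2} against the projection-formula identity $\deg\bigl(\Delta_X\cdot (f\times f)^{*}[\Delta_Y]\bigr)=\deg\bigl((f\times f)_{*}[\Delta_X]\cdot[\Delta_Y]\bigr)=d\,\deg c_n(Y)$. All of these steps are correct, and in the finite case your miracle-flatness observation makes the decomposition $(f\times f)^{*}[\Delta_Y]=[\Delta_X]+[R]$ with $R$ effective fully rigorous. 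That case covers every application in this paper: both Proposition~\ref{prop:g=2} and Proposition~\ref{prop:g=3} concern finite double covers $X\to\P^n$ and $X\to Q^n$.

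The step you flag for merely generically finite $f$ is, however, a genuine gap as the proposal stands, and ``verifying effectivity over the non-finite locus'' understates the difficulty: when $W=X\times_Y X$ has components $W_j$ of dimension $>n$, the refined Gysin class contributes on each such $W_j$ a term of the shape $\bigl\{c\bigl((f\times f)^{*}T_Y|_{W_j}\bigr)\cap s(W_j, X\times X)\bigr\}_{n}$, and Segre classes carry no sign in general, so the residual class need not be represented by an effective cycle and there is nothing to check componentwise --- additional input is required. One sufficient input: if the normal bundle $N_{\Delta_Y/Y\times Y}\cong T_Y$ is globally generated (true for $Y=\P^n$ and $Y=Q^n$, the only targets arising here), then Fulton's positivity theorem for intersection classes (\cite[Chapter~12]{Ful}) guarantees that every distinguished variety contributes a nonnegative class, while $\Delta_X$ still appears with multiplicity one, so your contrapositive $\deg(\Delta_X\cdot R)\geq 0$ goes through verbatim; for arbitrary smooth $Y$ one must defer to the treatment in \cite{LO} itself. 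In short: your proof is the intended one, complete for finite morphisms and hence for this paper's purposes, but the generically finite case of the statement is not closed by the argument as written.
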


By using the above results, we shall prove that the varieties as in (1) and (2)-(a) of Theorem~\ref{thm:Mukai} have non-nef tangent bundle. To prove it, we set up notation on weighted complete intersections. 
For a vector of positive integers $\a=(a_0, a_1, \ldots, a_m)$, let us denote by $\P(\a)$ the weighted projective space of type $\a$. 
Let $X \subset \P(\a)$ be an $n$-dimensional smooth weighted hypersurface of degree $d$. Assume that $X$ is contained in the smooth locus of $\P(\a)$ and $m \geq 4$. By \cite[Theorem~5.32]{KKA}, $\cO_X(1)$ generates ${\rm Pic}(X)$: ${\rm Pic}(X)=\Z[\cO_X(1)]$. We denote by $h \in A_{n-1}(X)$ the class corresponding to $\cO_X(1)$.

\begin{proposition}[{\cite[Proposition~4.4]{SW}}]\label{prop:weighted} Let $X \subset \P(\a)$ be a smooth weighted hypersurface of degree $d$. Assume that $X$ is contained in the smooth locus of $\P(\a)$ and $m \geq 4$. Then the degree of the top Chern class is given by 
$$\deg c_{m-1}(X)=\biggl\{ \sum_{i=0}^{n-1}e_{m-1-i}(a_0, \ldots,a_m)(-d)^i\biggr\} h^{m-1}.
$$
\end{proposition}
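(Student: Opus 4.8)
The plan is to obtain the total Chern class $c(T_X)$ as a rational function in the hyperplane class $h = c_1(\cO_X(1))$ from two short exact sequences, and then to read off the part of degree $m-1$. Because $X$ is contained in the smooth locus of $\P(\a)$, restricting the Euler sequence of the weighted projective space to $X$ gives an honest exact sequence of vector bundles
\[ \shse{\cO_X}{\bigoplus_{i=0}^m \cO_X(a_i)}{T_{\P(\a)}|_X}; \]
moreover, $X$ being a smooth hypersurface of degree $d$ lying in the smooth locus, its normal bundle is $N_{X/\P(\a)} \iso \cO_X(d)$, so we also have
\[ \shse{T_X}{T_{\P(\a)}|_X}{\cO_X(d)}. \]

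Applying the Whitney sum formula to the first sequence gives $c(T_{\P(\a)}|_X) = \prod_{i=0}^m (1 + a_i h)$, since $c(\cO_X) = 1$; applying it to the second gives $c(T_X)\,(1 + dh) = c(T_{\P(\a)}|_X)$. Because every bundle appearing has first Chern class a multiple of $h$, no appeal to the structure of the full Chow ring is needed here: we obtain directly
\[ c(T_X) = \frac{\prod_{i=0}^m (1 + a_i h)}{1 + dh}. \]
Writing $\prod_{i=0}^m (1 + a_i h) = \sum_{j} e_j(a_0,\ldots,a_m)\,h^j$ by the definition of the elementary symmetric polynomials and expanding $(1 + dh)^{-1} = \sum_{i \ge 0} (-d)^i h^i$, the top Chern class $c_{m-1}(X) = c_n(X)$ is the coefficient of $h^{m-1}$, obtained by collecting the terms with $i+j = m-1$; this gives $\bigl(\sum_{i} e_{m-1-i}(a_0,\ldots,a_m)(-d)^i\bigr)\,h^{m-1}$, which is the asserted formula after taking degrees.

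The computation itself is routine generating-function bookkeeping; the only points that genuinely need justification are the two inputs above. The validity of the Euler sequence as a sequence of \emph{vector} bundles on $X$ rests on the hypothesis that $X$ avoids the singular locus of $\P(\a)$, where the tangent sheaf fails to be locally free, and the identification $N_{X/\P(\a)} \iso \cO_X(d)$ is the standard adjunction computation for a degree-$d$ hypersurface, again valid in the smooth locus. Finally, the hypothesis $m \geq 4$ guarantees $\Pic(X) = \Z[\cO_X(1)]$ by \cite[Theorem~5.32]{KKA}, so that $h$ generates the relevant Picard group and the expressions above are unambiguous; this is the step I would set up most carefully, since the whole reduction to a polynomial in $h$ depends on it.
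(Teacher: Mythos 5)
Your proof is correct and follows exactly the standard argument behind this result (which the paper itself only quotes from \cite{SW} without reproof): restrict the Euler sequence of $\P(\a)$ to $X$, which is legitimate precisely because $X$ avoids the singular locus, combine with $N_{X/\P(\a)}\cong \cO_X(d)$, and expand $c(T_X)=\prod_{i=0}^{m}(1+a_ih)\cdot(1+dh)^{-1}$ as a power series in $h$. One small but worthwhile observation: collecting the terms with $i+j=m-1$ as you do yields the sum over the full range $0\le i\le m-1=n$, whereas the statement writes $\sum_{i=0}^{n-1}$; since the paper's own application in Proposition~\ref{prop:g=2} uses $\sum_{i=0}^{n}$, the stated upper limit $n-1$ is evidently a typo and your range is the correct one, so you should make the limits of your final sum explicit rather than leaving them implicit.
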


\begin{proposition}\label{prop:g=2} Let $X$ be a weighted hypersurface of degree $6$ in the weighted projective space $\P(3, 1, \ldots, 1)$. Then $\Delta_X$ is not nef. In particular, the tangent bundle of $X$ is not nef.  
\end{proposition}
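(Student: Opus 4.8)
The plan is to prove that the diagonal $\Delta_X\subset X\times X$ is not nef, which by the discussion preceding Proposition~\ref{prop:diag^2} implies that $T_X$ is not nef. Recall that $X$ is the double cover $\varphi\colon X\to\P^n$ branched along a smooth sextic $B\subset\P^n$; thus $\varphi$ is finite of degree $2$, and $\P(\a)=\P(3,1,\ldots,1)$ has $n+1$ weights equal to $1$, so the ambient weighted space has dimension $m=n+1$. Everything hinges on the single number $\deg c_n(X)$, and the key point is that its sign is governed by the parity of $n$; this dictates which of Proposition~\ref{prop:diag^2} and Proposition~\ref{prop:LO} I would invoke.

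First I would compute $\deg c_n(X)$ with Proposition~\ref{prop:weighted}. For $\a=(3,1,\ldots,1)$ the elementary symmetric functions are encoded by $\sum_j e_j t^j=(1+3t)(1+t)^{n+1}$, and the relevant normalization is $\deg h^{\,n}=d/\prod_i a_i=6/3=2$. Carrying out the resulting (routine) coefficient extraction — equivalently, using $\deg c_n(X)=\chi(X)=2\chi(\P^n)-\chi(B)$ together with the Euler characteristic of the smooth sextic $B$ — yields the closed form
\[
\deg c_n(X)=\frac{6n+7-(-5)^{\,n+1}}{6}.
\]
The dominant term is $-(-5)^{\,n+1}$, and its sign flips with the parity of $n$.

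If $n$ is odd, then $(-5)^{\,n+1}=5^{\,n+1}$, and the elementary bound $5^{\,n+1}>6n+7$ gives $\deg c_n(X)<0$. By Proposition~\ref{prop:diag^2} this says $\deg\Delta_X^2=\deg c_n(X)<0$. But if $\Delta_X$ were nef it would meet every $n$-dimensional closed subvariety of the $2n$-dimensional manifold $X\times X$ non-negatively, in particular the $n$-dimensional subvariety $\Delta_X$ itself, forcing $\deg\Delta_X^2\ge 0$; hence $\Delta_X$ is not nef. If $n$ is even, then $(-5)^{\,n+1}=-5^{\,n+1}$, so $\deg c_n(X)=\frac{6n+7+5^{\,n+1}}{6}$, and $5^{\,n+1}>6n+5$ yields $\deg c_n(X)>2(n+1)=2\deg c_n(\P^n)$. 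Applying Proposition~\ref{prop:LO} to the degree-$2$ morphism $\varphi\colon X\to\P^n$ then shows $\Delta_X$ is not nef.

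The step I expect to be the main obstacle is not any individual inequality — once the closed form is known, both $5^{\,n+1}>6n+7$ and $5^{\,n+1}>6n+5$ are immediate for all $n\ge 3$, the range in which Proposition~\ref{prop:weighted} applies — but rather recognizing the parity dichotomy. The naive comparison of $X$ with $\P^n$ through Proposition~\ref{prop:LO} succeeds only for even $n$; for odd $n$ one has $\deg c_n(X)<0<2(n+1)$, so the hypothesis of Proposition~\ref{prop:LO} fails outright, and one must instead read a negative top Chern number directly as an obstruction to nef-ness of $\Delta_X$ via Proposition~\ref{prop:diag^2}.
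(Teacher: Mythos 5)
Your proposal is correct and takes essentially the same route as the paper: the identical closed form $\deg c_n(X)=\frac{1}{6}\bigl\{6n+7-(-5)^{n+1}\bigr\}$ obtained from Proposition~\ref{prop:weighted}, the same parity dichotomy, with Proposition~\ref{prop:diag^2} handling odd $n$ via $\deg\Delta_X^2<0$ and Proposition~\ref{prop:LO} applied to the degree-$2$ cover $\varphi\colon X\to\P^n$ handling even $n$. The one step the paper carries out that you pass over is verifying the hypothesis of Proposition~\ref{prop:weighted} that $X$ lies in the smooth locus of $\P(3,1,\ldots,1)$ (done there via Dimca's general-position results, since smoothness of $X$ alone does not a priori keep it away from the singular point of the ambient space), although your alternative Euler-characteristic derivation $\deg c_n(X)=2\chi(\P^n)-\chi(B)$ of the same number legitimately sidesteps this check.
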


\begin{proof} Applying \cite[Proposition~7]{DD}, the singular locus $\P_{\rm sing}$ of $\P(3, 1, \ldots, 1)$ consists of one point $\{(1: 0: \ldots: 0)\}$.
Since $\dim X \geq 3$, we have $$\codim_X(X \cap \P_{\rm sing}) \geq 3.$$ Hence $X$ is in general position relative to $\P_{\rm sing}$ in the sense of A. Dimca \cite[Definition~1]{Dim}. Then \cite[Proposition~8]{Dim} tells us that the singular locus of $X$ coincides with $X \cap \P_{\rm sing}$. Since $X$ is smooth by definition, $X$ is contained in the smooth locus of $\P(3, 1, \ldots, 1)$. 

Moreover, $X$ is a double cover branched along a sextic in $\P^n$. Applying Proposition~\ref{prop:weighted}, we obtain
$$\deg c_n(X) = 2\sum_{i=0}^ne_{n-i}(3,1^{n+1})(-6)^i.
$$
Remark that
$$e_k(3,1^{n+1})=3e_{k-1}(1^{n+1})+e_{k}(1^{n+1})=3\binom{n+1}{k-1}+\binom{n+1}{k}.
$$
Thus we have 
\begin{eqnarray}
 \deg c_n(X)&=&2\sum_{i=0}^n \biggl\{ 3\binom{n+1}{i+2}+\binom{n+1}{i+1} \biggr\}(-6)^i \nonumber \\
  &=&\dfrac{1}{6}\sum_{i=0}^{n-1} \binom{n+1}{i+2}(-6)^{i+2}-\dfrac{1}{3}\sum_{i=0}^{n}\binom{n+1}{i+1}(-6)^{i+1}  \nonumber \\
&=&\dfrac{1}{6}\biggl\{ (-5)^{n+1}+6(n+1)-1\biggr\} -\dfrac{1}{3}\biggl\{ (-5)^{n+1}-1\biggr\} \nonumber \\
&=&\dfrac{1}{6}\biggl\{ 6n+7-(-5)^{n+1}\biggr\}.  \nonumber 
\end{eqnarray}
If $n$ is odd, then $\deg c_n(X)$ is negative. Thus $\Delta_X$ is not nef. If $n$ is even, then it is straightforward to show the following:
$$\dfrac{1}{6}\biggl\{ 6n+7-(-5)^{n+1}\biggr\}>2(n+1)~\text{if}~n~\text{is even}.
$$ 
Hence it follows from Proposition~\ref{prop:LO} that $\Delta_X$ is not nef.
\end{proof}

\begin{proposition}\label{prop:g=3} Let $X$ be a weighted complete intersection of type $(4,2)$ in the weighted projective space $\P(2, 1, \ldots, 1)$. Then $\Delta_X$ is not nef. In particular, the tangent bundle of $X$ is not nef.  
\end{proposition}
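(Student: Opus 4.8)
The plan is to mirror the strategy already used in Proposition~\ref{prop:g=2}: reduce to a top Chern class computation via Proposition~\ref{prop:weighted} and then derive non-nefness of $\Delta_X$ from Proposition~\ref{prop:LO} (or directly from negativity). Here $X$ is a weighted complete intersection of type $(4,2)$ in $\P(2,1,\ldots,1)$, presented as a double cover $\varphi\colon X\to Q^n\subset\P^{n+1}$ of degree $2$. First I would handle the smoothness-relative-to-singularities bookkeeping exactly as before: by \cite[Proposition~7]{DD} the singular locus $\P_{\rm sing}$ of $\P(2,1,\ldots,1)$ is the single point $\{(1:0:\cdots:0)\}$, so $\codim_X(X\cap\P_{\rm sing})\geq 3$ since $\dim X\geq 3$; this places $X$ in general position relative to $\P_{\rm sing}$ in the sense of \cite[Definition~1]{Dim}, whence \cite[Proposition~8]{Dim} and the smoothness of $X$ force $X$ into the smooth locus of $\P(2,1,\ldots,1)$. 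This legitimizes applying Proposition~\ref{prop:weighted}.

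Next I would compute $\deg c_n(X)$. The only subtlety over the previous proposition is that $X$ is cut out by two weighted hypersurfaces rather than one, so I would use the complete-intersection analogue of Proposition~\ref{prop:weighted} (the total Chern class of a smooth weighted complete intersection is the product $\prod(1+a_ih)$ of the ambient weights divided by the product $\prod(1-d_jh)$ over the degrees $d_j$, evaluated against the appropriate power of $h$). Concretely, with weights $(2,1^{n+2})$ and degrees $(4,2)$, the top Chern class is obtained by extracting the degree-$n$ part of $\frac{(1+2h)(1+h)^{n+2}}{(1-4h)(1-2h)}h^{\dim}$ and multiplying by the Plücker-type degree $h^n=\deg X=8$ of $X$ (as a double cover of $Q^n$, $h^n=2\cdot\deg Q^n=2\cdot 2=4$, matching $2g-2$ with $g=3$). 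I expect the generating-function manipulation to collapse, just as in Proposition~\ref{prop:g=2}, to a closed form dominated by a single geometric term such as $(-3)^{n+1}$ or similar, up to an explicit low-degree polynomial in $n$.

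The conclusion then splits into parity cases as before: if $n$ is odd the dominant exponential term makes $\deg c_n(X)$ negative, giving non-nefness of $\Delta_X$ immediately since a nef $0$-cycle has nonnegative degree and $\deg\Delta_X^2=\deg c_n(X)$ by Proposition~\ref{prop:diag^2}; if $n$ is even I would verify the strict inequality $\deg c_n(X) > 2\deg c_n(Q^n)$ and invoke Proposition~\ref{prop:LO} with the degree-$2$ morphism $\varphi\colon X\to Q^n$, using the known Euler characteristic $\deg c_n(Q^n)=n+2$ for odd $n$ and $n+1$ for even $n$ (only the even case is needed here, where it is $n+2$). The main obstacle I anticipate is purely computational: correctly evaluating the weighted complete-intersection Chern class with two degrees and a nonstandard weight, and then extracting a clean closed form whose parity behavior can be read off. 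The geometric input (double cover structure, degree of $Q^n$, smooth locus verification) is routine and parallels the $g=2$ case; the one place to be careful is that the quotient-singularity weight $a_0=2$ enters both the numerator and the target $Q^n$, so I would double-check the normalization of $h^n$ against the genus formula $H^n=2g-2=4$ before finalizing the inequality.
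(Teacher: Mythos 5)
Your strategy is exactly the paper's proof: the paper extends Proposition~\ref{prop:weighted} to the complete-intersection setting, obtains the closed form $\deg c_n(X)=\frac{(-3)^{n+2}+4n+7}{4}$, concludes via Proposition~\ref{prop:diag^2} when $n$ is odd, and via Proposition~\ref{prop:LO} when $n$ is even (the paper verifies $\deg c_n(X)>4(n+1)$; your threshold $2\deg c_n(Q^n)=2(n+2)$ using the degree-two cover $\varphi\colon X\to Q^n$ is an equally valid, indeed weaker, requirement). Your smooth-locus bookkeeping via Dimca mirrors the paper's $g=2$ argument and is a welcome addition, since the paper's own proof of this proposition leaves it implicit.

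One step as written would fail and must be corrected: the total Chern class of a smooth weighted complete intersection contained in the smooth locus is $c(T_X)=\prod_i(1+a_ih)\big/\prod_j(1+d_jh)$ --- the degrees enter with a \emph{plus} sign in the denominator, because the normal bundle is $\bigoplus_j\cO_X(d_j)$; expanding $1/(1+d_jh)$ then produces the alternating $(-d_j)^k$ coefficients consistent with Proposition~\ref{prop:weighted}. Your denominator $\prod_j(1-d_jh)$ would yield only positive coefficients, hence no $(-3)^{n+2}$ term and no negativity for odd $n$, killing the parity argument; since you anticipate a $(-3)^{n+1}$-type dominant term you evidently intend the correct convention, but the formula you state is the wrong one. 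With the correct sign the factor $(1+2h)$ cancels between numerator and denominator, giving $c(T_X)=(1+h)^{n+2}/(1+4h)$, and combined with $h^n=\frac{4\cdot 2}{2}=4$ (not $8$; your parenthetical self-correction, checked against $H^n=2g-2=4$, is the right value) this recovers the paper's closed form, after which both parity cases go through as you describe. Finally, note that $\chi(Q^n)=n+2$ for $n$ even and $n+1$ for $n$ odd --- you stated this backwards, though the value you actually use in the even case, $n+2$, is correct.
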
 

\begin{proof} By the same way as in Proposition~\ref{prop:weighted}, we obtain
$$\deg c_n(X)=\dfrac{(-3)^{n+2}+4n+7}{4}.
$$ From this description, we see that $\deg c_n(X)$ is negative if $n$ is odd. Thus Proposition~\ref{prop:diag^2} tells us that $\Delta_X$ is not nef. On the other hand, if $n$ is even, we have $\deg c_n(X)>4(n+1)$. By Proposition~\ref{prop:LO}, $\Delta_X$ is not nef.    
\end{proof}

\section{The cases where $g=7$}\label{sec:g=7}

\subsection{Ottaviani bundle}\label{subsec:ott}

Let us first describe some known results on spinor varieties and Ottaviani bundles without proof. All materials here are in \cite{Ott}. 
We start by considering $2k$-dimensional smooth quadric $Q^{2k}\subset \P^{2k+1}$. The linear spaces of maximal dimension of $Q^{2k}$ have dimension $k$. The Fano scheme of $k$-planes on $Q^{2k}$ consists of two irreducible families, each of which is a rational homogeneous variety of dimension $\dfrac{k(k+1)}{2}$, called the {\it spinor variety} $S_k$ of $Q^{2k}$. One often finds the literature that $S_{k}$ is denoted by $S_{k+1}$. In this paper, we employ the notation as in \cite{Ott}. We have 
\begin{itemize}
\item ${\rm Pic}(S_k) \cong \Z$; 
\item the ample generator $\cO_{S_k}(1)$ of ${\rm Pic}(S_k)$ is very ample; and  
\item $H^0(S_k, \cO_{S_k}(1))\cong \C^{2^k}$.
\end{itemize}
Let $Q^{2k+2} \subset \P^{2k+3}$ be a smooth quadric and $H$ a non-tangent hyperplane. Then $Q^{2k+1}=Q^{2k+2}\cap H$ is a smooth hyperplane section of $Q^{2k+2}$. Consider the natural morphism 
$$
\{\P^{k+1}\mid \P^{k+1} \subset Q^{2k+2}\} \to \{ \P^{k}\mid \P^{k} \subset Q^{2k+1}\}; \P^{k+1} \mapsto \P^{k+1} \cap H 
.$$  If we restrict this morphism to each component $S_{k+1}$ of $\{\P^{k+1}\mid \P^{k+1} \subset Q^{2k+2}\}$, we have an isomorphism 
$$ S_{k+1} \cong \{ \P^{k}\mid \P^{k} \subset Q^{2k+1}\}.
$$
Under this isomorphism, let us consider the following incidence correspondence: 
\begin{equation}\label{eq:diag}
\vcenter{
\xymatrix{
 &  F:=\{(x, \P^k)\mid x \in \P^k \subset Q^{2k+1}\}  \ar[dr]^{q} \ar[dl]_{p} &   \\
Q^{2k+1} & &S_{k+1} \\
} }
\end{equation}
For $x \in Q^{2k+1}$, consider an inclusion  
$$
i_x: (S_k)_x:=\{\P^k \mid x \in \P^k \subset Q^{2k+1}\} \hookrightarrow \{ \P^{k}\mid \P^{k} \subset Q^{2k+1}\}\cong S_{k+1}.
$$ 
Remark that $(S_k)_x \cong S_{k}$. By \cite[Corollary~1.2~(i)]{Ott}, we have 
\begin{itemize}
\item an isomorphism $i_x^{\ast}\cO_{S_{k+1}}(1) \cong \cO_{(S_k)_x}(1)$ and 
\item an injection $H^0((S_{k})_x, \cO(1))^{\ast} \hookrightarrow H^0(S_{k+1}, \cO(1))^{\ast}$. 
\end{itemize}
Then we have an embedding $\tau: Q^{2k+1} \hookrightarrow G(2^k, 2^{k+1})$. By definition, the spinor bundle $\cS$ on $Q^{2k+1}$ is $\tau^{\ast}\cU$, where $\cU$ is the universal bundle of $G(2^k, 2^{k+1})$. Then we see that the vector bundle $p_{\ast}q^{\ast}\cO_{S_{k+1}}(1)$ is the dual of the spinor bundle $\cS^{\ast}$ on $Q^{2k+1}$, and $p: F \to Q^{2k+1}$ is given by the bundle projection $F=\P_{Q^{2k+1}}(\cS^{\ast})\to Q^{2k+1}$. In particular, when $k=2$, we see that $S_3\cong Q^6$ by triality. Thus the above diagram  (\ref{eq:diag}) can be written as follows:
\begin{equation}\label{eq:diag:2}
\vcenter{
\xymatrix{
 &  F=\P_{Q^5}(\cS^{\ast})   \ar[dr]^{q} \ar[dl]_{p}&   \\
Q^{5} & &S_{3}\cong Q^6 \\
} }
\end{equation}

For any integer $0 \leq i \leq 5$, the Chow group $A_i(Q^5)_{\Z}$ with integral coefficients is isomorphic to $\Z$. We fix the positive generator $Z_i$ of $A_i(Q^5)_{\Z}$. Then, for simplicity, $Z \in A_i(Q^5)_{\Z}$ is denoted by $\alpha_i$ provided that $Z=\alpha_iZ \in A_i(Q^5)_{\Z}$. 
By \cite[Remark~2.9]{Ott}, the Chern classes of $\cS^{\ast}$ on $Q^5$ is $(c_1, c_2, c_3, c_4)=(2,2,2,0)$. In particular, generic sections of $\cS^{\ast}$ on $Q^5$ do not vanish anywhere. This tells us that $\cS^{\ast}$ contains a trivial subbundle  fitting into the following exact sequence of vector bundles:
$$
0 \to \cO_{Q^5} \to \cS^{\ast} \to \cG \to 0,
$$
where $\cG=\cS^{\ast}/ \cO_{Q^5}$ is a vector bundle of rank three on $Q^5$. The bundle $\cG$ is a stable vector bundle of rank three on $Q^5$ with Chern classes $(c_1, c_2, c_3)=(2,2,2)$. On the other hand, the converse also holds:

\begin{theorem}[{\cite[Section~3]{Ott}}]\label{them:ott} For a vector bundle $\cG$ of rank three on $Q^5$, the following are equivalent:
\begin{enumerate}
 \item $\cG$ is stable and its Chern classes are $(c_1, c_2, c_3)=(2,2,2)$;
 \item $\cG$ fits into the following exact sequence of vector bundles:
$$
0 \to \cO_{Q^5} \to \cS^{\ast} \to \cG \to 0.
$$
\end{enumerate}
\end{theorem}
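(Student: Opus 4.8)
The plan is to prove the two implications separately, treating $(2)\Rightarrow(1)$ as the elementary direction and $(1)\Rightarrow(2)$ as the substantial one. For $(2)\Rightarrow(1)$, I would first read off the Chern classes from the defining sequence: since $\cO_{Q^5}$ is trivial, the Whitney formula gives $c(\cG)=c(\cS^{\ast})$, which the excerpt records as $(c_1,c_2,c_3,c_4)=(2,2,2,0)$, so in particular $(c_1,c_2,c_3)=(2,2,2)$. For stability I would use that the spinor bundle $\cS^{\ast}$ is itself stable (a standard fact of \cite{Ott}), with slope $\mu(\cS^{\ast})=\tfrac12$ in the normalization $\mu=c_1/\rank$ relative to the generator $h$, while $\mu(\cG)=\tfrac23$. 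Suppose a subsheaf $\cF\subset\cG$ had $\mu(\cF)\ge\tfrac23$; pulling $\cF$ back along $\cS^{\ast}\to\cG$ produces $\cF'\subset\cS^{\ast}$ sitting in $0\to\cO_{Q^5}\to\cF'\to\cF\to0$, so that $\rank\cF'=\rank\cF+1$ and $c_1(\cF')=c_1(\cF)$. Because $c_1$ is an integer, the inequality $\mu(\cF)\ge\tfrac23$ forces $\mu(\cF')\ge\tfrac12$ in both the rank-one and rank-two cases, contradicting the stability of $\cS^{\ast}$; hence $\cG$ is stable.

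For $(1)\Rightarrow(2)$ the goal is to realize a given stable $\cG$ with $(c_1,c_2,c_3)=(2,2,2)$ as the quotient $\cS^{\ast}/\cO_{Q^5}$; dually, this amounts to exhibiting $\cG^{\ast}$ as the kernel of a surjection $\cS\to\cO_{Q^5}$, and a Chern-class check shows the numerics of $\cG^{\ast}$ and of such a kernel agree. First I would extract the vanishing theorems forced by stability: a nonzero section of $\cG(-1)$ would give a rank-one subsheaf of slope $\ge1>\tfrac23$, so $H^0(\cG(-1))=0$, and analogous slope estimates together with Serre duality on the Fano manifold $Q^5$ (where $-K_{Q^5}=5h$) control the remaining cohomology of the relevant twists. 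I would then pin down the surviving groups numerically by Hirzebruch--Riemann--Roch, computing the Euler characteristics of $\cG$ tensored against the members of the full exceptional collection on $Q^5$ generated by the twists $\cO_{Q^5}(i)$ and the spinor bundle.

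The engine of this direction is the Beilinson--Kapranov spectral sequence attached to that exceptional collection. Feeding $\cG$ into it, the $E_1$-page is assembled from the cohomology groups just computed, and the vanishings coming from stability collapse the page so drastically that only two terms survive, namely one copy of $\cO_{Q^5}$ and one copy of the spinor bundle in the twist $\cS^{\ast}$; the sequence then degenerates to a two-term complex $0\to\cO_{Q^5}\to\cS^{\ast}\to\cG\to0$. Finally I would verify that the resulting map $\cS^{\ast}\to\cG$ is a genuine bundle surjection rather than a mere sheaf map: any locus where it dropped rank would produce, via the kernel, a subsheaf of $\cG$ of slope exceeding $\tfrac23$, which is again excluded by the stability of $\cG$, so the quotient is locally free and the kernel is forced to be the trivial line bundle.

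I expect the main obstacle to lie precisely in this last direction: obtaining the cohomology vanishings sharply enough that the spectral sequence leaves \emph{exactly} one $\cO_{Q^5}$ and one $\cS^{\ast}$, and then confirming that the leftmost term is the trivial line bundle of multiplicity one rather than some other sheaf carrying the same Chern numbers. Throughout, it is the stability hypothesis on $\cG$ that rules out the competing degenerations, so the proof is really a dialogue between the numerical Riemann--Roch data and the slope inequalities.
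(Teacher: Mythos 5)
The paper never proves this statement: Theorem~\ref{them:ott} is imported verbatim from Ottaviani \cite[Section~3]{Ott}, so there is no in-paper argument to compare you against, and your sketch has to be judged on its own merits and against Ottaviani's published proof. Your direction $(2)\Rightarrow(1)$ is complete and correct: Whitney plus $c(\cS^{\ast})=(2,2,2,0)$ gives the Chern classes, and the preimage argument is sound --- it suffices to test saturated subsheaves, the preimage $\cF'$ is a proper subsheaf of $\cS^{\ast}$ since its rank is at most $3<4$, and integrality of $c_1$ forces $\mu(\cF')\geq\tfrac12=\mu(\cS^{\ast})$ in both the rank-one and rank-two cases, contradicting Ottaviani's stability of the spinor bundle (the honest slopes on $Q^5$ carry a factor $\deg h^5=2$, which changes none of the inequalities).

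For $(1)\Rightarrow(2)$ you have named the right machine, and it is in substance the route the literature actually takes: after the twist $\cG(-1)$ (a Cayley bundle) the classification goes through stability-induced vanishings, Riemann--Roch, and Ottaviani's Beilinson-type theorem on quadrics, whose exceptional collection contains the spinor bundle. But as written this direction is a program, not a proof, and the gap sits exactly where you flag it. Stability cheaply gives $H^0(\cG(-1))=0$ and, via Serre duality with $-K_{Q^5}=5h$, the corresponding top-degree vanishings; it does \emph{not} by itself kill the intermediate cohomology of the twists $\cG(-i)$ and of $\cG\otimes\cS(-i)$ that populate the $E_1$-page, and those vanishings (in practice obtained by restricting to hyperplane and quadric sections and bootstrapping, or by case-by-case Riemann--Roch bookkeeping) are the actual mathematical content; ``the page collapses so drastically that only one $\cO_{Q^5}$ and one $\cS^{\ast}$ survive'' asserts the conclusion rather than deriving it. One small misstatement at the end: the kernel of a generically surjective map $\cS^{\ast}\to\cG$ is a subsheaf of $\cS^{\ast}$, not of $\cG$, so the clean way to finish is to note that the saturated kernel is a line bundle $\cO_{Q^5}(a)$ (as $\Pic(Q^5)=\Z$), that stability of $\cS^{\ast}$ forces $a\leq 0$, and that comparing $c_1$ with $c_1(\cG)=2$ forces $a=0$ and local freeness of the quotient; this is cosmetic, but the unproved cohomology vanishings in the spectral-sequence step constitute a genuine gap.
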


\begin{definition}[{\cite[Definition~2.1]{Kane3}}]\label{def:ott} A vector bundle $\cG$ of rank three on $Q^5$ is said to be {\it Ottaviani bundle} if the equivalent conditions of Theorem~\ref{them:ott} hold for $\cG$. In the following, any Ottaviani bundle is denoted by $\cG_o$.
\end{definition}

\begin{proposition}\label{prop:ott} Under the notation as in the above diagram {\rm(\ref{eq:diag:2})}, let $Q^5$ be a smooth hyperplane section of $Q^6 \cong S_3$. Then the preimage $q^{-1}(Q^5)\to Q^5$ is given by $\P_{Q^5}(\cG_o)\to Q^5$, where $\cG_o$ is an Ottaviani bundle.
\end{proposition}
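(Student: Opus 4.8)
The plan is to exhibit $q$ as a second projective-bundle structure on $F$ and to recognise the associated rank-three bundle on $S_3\cong Q^6$ as an Ottaviani bundle once restricted to the hyperplane section.

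First I would observe that $q\colon F\to S_3\cong Q^6$ is a $\P^2$-bundle: the fiber over $[\P^2]\in S_3$ is $\{(x,\P^2)\mid x\in\P^2\}\cong\P^2$, and the pullback by $p$ of the hyperplane class of the quadric fivefold on the source side of {\rm(\ref{eq:diag:2})} is $q$-relatively very ample, since it restricts to $\cO_{\P^2}(1)$ on every such fiber. Hence $F\cong\P_{S_3}(\cW)$ for a rank-three bundle $\cW$ on $S_3\cong Q^6$. Given a smooth hyperplane section $Q^5\subset S_3\cong Q^6$ as in the statement, this bundle structure restricts to $q^{-1}(Q^5)\cong\P_{Q^5}(\cW|_{Q^5})$ over $Q^5$, so everything reduces to showing that $\cW|_{Q^5}$ is an Ottaviani bundle.

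The core step is to produce on $S_3\cong Q^6$ a short exact sequence $0\to\cO_{Q^6}\to(\cS')^{\ast}\to\cW\to 0$, where $\cS'$ is one of the two spinor bundles on $Q^6$; this is the $S_3$-side analogue of the sequence $0\to\cO\to\cS^{\ast}\to\cG\to0$ on the quadric side, and it is dictated by the symmetry of the incidence $F$ under triality together with Ottaviani's description of $\cW$ as (a twist of) the dual tautological bundle of $S_3=OG(3,7)$, all of which is contained in \cite{Ott}. Restricting this sequence to $Q^5$ and invoking the fact that a spinor bundle on $Q^6$ restricts to the spinor bundle $\cS$ of its smooth hyperplane section (also \cite{Ott}), I obtain $0\to\cO_{Q^5}\to\cS^{\ast}\to\cW|_{Q^5}\to0$ on $Q^5$. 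By Theorem~\ref{them:ott} this presentation is exactly the second characterisation of an Ottaviani bundle, so $\cW|_{Q^5}\cong\cG_o$ and the proposition follows. As a consistency check, the sequence yields $c(\cW|_{Q^5})=c(\cS^{\ast})$, whose Chern data $(2,2,2)$ on $Q^5$ match those required of an Ottaviani bundle.

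I expect the middle paragraph to be the main obstacle, namely pinning down $\cW$ precisely: fixing the correct twist, selecting which of the two spinor bundles on $Q^6$ enters, and verifying the restriction formula for spinor bundles under the hyperplane section. Should this bookkeeping become delicate, an alternative is to verify the first characterisation in Theorem~\ref{them:ott} directly for $\cW|_{Q^5}$, computing its Chern classes from the relative Euler sequence of $q$ and establishing stability by hand; I nonetheless expect the exact-sequence route to be shorter, as it reduces the entire verification to the cited results of Ottaviani.
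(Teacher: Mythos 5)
Your proposal is correct and takes essentially the same route as the paper's own proof: like the paper, you realise $q$ as the projectivization of the rank-three bundle $q_{\ast}p^{\ast}\cO_{Q^5}(1)$ (the dual tautological bundle under the immersion $Q^6\hookrightarrow G(3,7)$, drawn from the proof of Ottaviani's Theorem~3.5), restrict the sequence $0\to\cO_{Q^6}\to\cS'^{\ast}\to\cW\to0$ to the hyperplane section using the restriction property of spinor bundles, and conclude via characterisation (2) of Theorem~\ref{them:ott}. The only difference is cosmetic: the paper cites the relevant sequence and restriction isomorphism directly from \cite{Ott} rather than invoking triality, so no further work is needed on your ``main obstacle'' paragraph.
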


\begin{proof} We recall some arguments in the proof of \cite[Theorem~3.5]{Ott}. Since $Q^6\cong S_3$ parametrizes $2$-planes in $Q^5$, we have a closed immersion $Q^6 \hookrightarrow G(3, 7)$. Then the pullback $\cU^{\ast}$ of the dual of the universal bundle by the above immersion $Q^6 \hookrightarrow G(3, 7)$ coincides with $q_{\ast}p^{\ast}\cO_{Q^5}(1)$. 
Then we have an exact sequence of vector bundles on $Q^6$: 
$$
0 \to \cO_{Q^6} \to \cS'^{\ast} \to \cU^{\ast} \to 0,
$$ 
where $\cS'$ is a spinor bundle on $Q^6$.
By \cite[Theorem~1.4]{Ott}, we have $\cS'^{\ast}|_{Q^5}\cong \cS$. Restricting this sequence to a hyperplane section $Q^5$ of $Q^6$, we obtain an exact sequence:
$$
0 \to \cO_{Q^5} \to \cS^{\ast} \to \cU^{\ast}|_{Q^5} \to 0.
$$ 
From this sequence, $\cU^{\ast}|_{Q^5}$ is nothing but an Ottaviani bundle on $Q^5$.
Since $q: F \to Q^6$ is given by $\P_{Q_6}(\cU^{\ast}) \to Q^6$, our assertion holds. 
\end{proof}

\subsection{Linear sections of $10$-dimensional spinor variety}\label{subsec:spinor}

Let $X$ be a smooth linear section of the $10$-dimensional spinor variety $S_4  \subset \P^{15}$. We shall prove the only smooth linear section of $S_4$ with nef tangent bundle is $S_4$ itself. To prove it, we may assume that $7 \leq \dim X \leq 9$ by Remark~\ref{rem:cases}. Moreover when $\dim X =9$, $X$ is a horospherical variety. This case was done by \cite[Proposition~4.6]{Li2}. Thus we only need to deal with the cases of $\dim X=8$ and $\dim X=7$.

\subsubsection{\rm \bf The case of $\dim X=8$}

Let us begin with a review of A. G. Kuznetsov's results \cite[Section~6]{Kuz}. Throughout this subsection, let $X$ be a smooth linear section $X \subset S_4$ of codimension two. 

\begin{definition}[{\cite[Definition~6.2]{Kuz}}]\label{def:special} A smooth linear section $X \subset S_4$ of codimension two is said to be {\it special} (resp. {\it non-special}), if $X$ contains (resp. does not contain) a $4$-plane.
\end{definition}

\begin{proposition}[{\cite[{Lemma~6.4}]{Kuz}}]\label{prop:line} A smooth linear section $X \subset S_4$ of codimension two is special if and only if there is a line $\ell \subset X$ such that
$$
N_{\ell/X}\cong \cO_{\ell}(-2) \oplus \cO_{\ell}^{\oplus 6}.
$$
\end{proposition}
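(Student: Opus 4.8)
The plan is to read off the splitting type of $N_{\ell/X}$ from that of $N_{\ell/S_4}$, which is pinned down by homogeneity. Since $S_4$ is homogeneous under $\mathrm{Spin}_{10}$ and a line $\ell\subset S_4$ is a minimal rational curve, the restricted tangent bundle has the standard form $T_{S_4}|_\ell\cong\cO_\ell(2)\oplus\cO_\ell(1)^{\oplus 6}\oplus\cO_\ell^{\oplus 3}$, so $N_{\ell/S_4}\cong\cO_\ell(1)^{\oplus 6}\oplus\cO_\ell^{\oplus 3}$; here the rank is $9=\dim S_4-1$ and the number of positive summands equals $\dim(\text{VMRT})=-K_{S_4}\cdot\ell-2=6$, the VMRT of $S_4$ at a point being $G(2,5)\subset\P^9$. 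Writing $X=S_4\cap\P^{13}$ as an intersection with two hyperplanes gives $N_{X/S_4}|_\ell\cong\cO_\ell(1)^{\oplus 2}$ and the conormal sequence
\[
0\to N_{\ell/X}\to\cO_\ell(1)^{\oplus 6}\oplus\cO_\ell^{\oplus 3}\xrightarrow{\ \phi\ }\cO_\ell(1)^{\oplus 2}\to 0 .
\]
Since the two hyperplane sections drop the index from $8$ to $6$, we have $-K_X\cdot\ell=6$ and hence $\deg N_{\ell/X}=4$; as $N_{\ell/X}$ is a rank-$7$ subbundle of the middle term, every summand has degree $\le 1$. The whole statement then reduces to the behaviour of $\phi$, equivalently of the scalar block $\phi_1\colon\cO_\ell(1)^{\oplus 6}\to\cO_\ell(1)^{\oplus 2}$ coming from the two defining linear forms: one checks directly that $N_{\ell/X}\cong\cO_\ell(-2)\oplus\cO_\ell(1)^{\oplus 6}$ (degree $4$, as required) if and only if $\phi_1=0$, i.e. the entire positive part of $N_{\ell/S_4}$ survives into $N_{\ell/X}$, the surjection onto $\cO_\ell(1)^{\oplus 2}$ being carried by the block $\cO_\ell^{\oplus 3}\to\cO_\ell(1)^{\oplus 2}$ whose kernel is $\cO_\ell(-2)$.

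For the forward implication, suppose $X$ contains a $4$-plane $\Pi\cong\P^4$. I would first identify the homogeneous normal bundle $N_{\Pi/S_4}$ (a rank-$6$ bundle on $\P^4$ with $c_1=3H$), pass to $N_{\Pi/X}=\ker\big(N_{\Pi/S_4}\to\cO_\Pi(1)^{\oplus 2}\big)$, and restrict to a general line $\ell\subset\Pi$; combining this with $N_{\ell/\Pi}\cong\cO_\ell(1)^{\oplus 3}$ via $0\to N_{\ell/\Pi}\to N_{\ell/X}\to N_{\Pi/X}|_\ell\to 0$. Because the two linear forms cutting out $X$ vanish identically on the linear space $\Pi$, their normal derivatives degenerate along $\Pi$, forcing the scalar block $\phi_1$ to vanish on all six positive directions; this produces the single negative summand $\cO_\ell(-2)$ and the asserted splitting.

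For the converse I would begin with a line $\ell$ of this special type, so that $\phi_1=0$ and the full positive part $\cO_\ell(1)^{\oplus 6}$ of $N_{\ell/S_4}$ lies in $N_{\ell/X}$; equivalently every first-order deformation of $\ell$ inside $S_4$ in a positive normal direction stays inside $X$. The space of such deformations is large ($h^0(N_{\ell/X}(-1))=6$), and the goal is to show, through the VMRT description, that the lines obtained by deforming $\ell$ through a fixed point sweep out a linear subspace of $S_4$ contained in $X$, which the classification of maximal linear spaces on $S_4$ identifies as a $4$-plane, so that $X$ is special. This converse step is the main obstacle: promoting the numerical vanishing $\phi_1=0$ to an honest $4$-plane requires controlling global (not merely infinitesimal) deformations of $\ell$ and then invoking the precise geometry of $S_4$ — its VMRT $G(2,5)\subset\P^9$ and the two families of maximal $\P^4$'s — to see that the swept-out locus is genuinely linear of dimension exactly $4$. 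The other technical input, used in the forward direction, is the explicit computation of $N_{\Pi/S_4}$ and the verification that the induced degeneration of $\phi$ is the generic behaviour along $\Pi$.
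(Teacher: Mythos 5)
Two remarks before the main point. First, the paper does not prove this proposition at all: it is quoted directly from Kuznetsov's Lemma~6.4, so your attempt has to be measured against supplying a complete argument rather than against a proof in the text. Second, your bookkeeping is correct and in fact exposes a misprint in the statement as printed: since $-K_X\cdot\ell=6$, the bundle $N_{\ell/X}$ is a corank-two subbundle of $N_{\ell/S_4}\cong\cO_\ell(1)^{\oplus 6}\oplus\cO_\ell^{\oplus 3}$ of rank $7$ and degree $4$, so the type $\cO_\ell(-2)\oplus\cO_\ell^{\oplus 6}$ (degree $-2$) can never occur; the degree-consistent type, and the one in Kuznetsov's lemma, is $\cO_\ell(-2)\oplus\cO_\ell(1)^{\oplus 6}$, which is what you actually work with. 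Your reduction is also sound: writing $\phi=(\phi_1,\phi_0)$ with $\phi_1$ the constant block on the positive part, the cases $\rk\phi_1=2,1,0$ give exactly $\cO_\ell(1)^{\oplus 4}\oplus\cO_\ell^{\oplus 3}$, $\cO_\ell(1)^{\oplus 5}\oplus\cO_\ell\oplus\cO_\ell(-1)$ and $\cO_\ell(1)^{\oplus 6}\oplus\cO_\ell(-2)$, and the last occurs iff $\phi_1=0$, because $\Hom(\cO_\ell(1),\cO_\ell)=0$ forces the positive part of the kernel into the positive part of $N_{\ell/S_4}$. (For the paper's application only the existence of a negative summand matters, so the misprint is harmless downstream.)

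The gap is that neither implication is actually proved. In the forward direction, the claim that because the two linear forms $h_1,h_2$ vanish on $\Pi$ ``their normal derivatives degenerate along $\Pi$, forcing $\phi_1$ to vanish on all six positive directions'' is a non sequitur: $h_i|_\Pi\equiv 0$ kills $dh_i$ only on $T_\Pi|_\ell$, and inside the positive subbundle $\cO_\ell(1)^{\oplus 6}\subset N_{\ell/S_4}$ the image of $T_\Pi|_\ell$ accounts only for the rank-three piece $N_{\ell/\Pi}\cong\cO_\ell(1)^{\oplus 3}$. So three of the six positive directions are annihilated for free; annihilating the remaining three --- i.e.\ showing that for a suitable line $\ell\subset\Pi$ the other positive directions of $N_{\ell/S_4}$ also lie in the two hyperplanes --- is precisely the geometric content of the lemma, and it requires understanding how the positive part of $N_{\ell/S_4}$ sits relative to the embedded tangent spaces of $S_4$ (equivalently, to the $4$-planes of $S_4$ through $\ell$); you acknowledge this ``verification'' but never perform it. In the converse direction you explicitly label the step ``the main obstacle'' and offer only a program: nothing in the proposal shows that the infinitesimal condition $\phi_1=0$ (equivalently $h^0(N_{\ell/X}(-1))=6$) integrates to an honest $\P^4\subset X$ --- there is no argument that the locus swept out by the deformations of $\ell$ is linear, nor that its dimension is exactly $4$. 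As it stands the proposal is a correct numerical reduction plus a plan; the two deferred geometric steps are the entire substance of the cited lemma, so the proof is incomplete.
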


By Proposition~\ref{prop:line}, a smooth linear section $X \subset S_4$ of codimension two has non-nef tangent bundle if $X$ is special. So we may assume that $X$ is non-special. In this case, we have

\begin{proposition}[{\cite[{Proposition~6.8,~Corollary~6.11}]{Kuz}}]\label{prop:str} Let $X$ be a smooth linear section $X \subset S_4$ of codimension two. If $X$ is non-special, then we have the following diagram
\[\xymatrix{
E \ar@{^{(}-_{>}}[r]^{} \ar[d]^{}  &  {\rm Bl}_Z(X) \ar[d]^{}  \ar[r]^{} & Q^5  \\
Z \ar@{^{(}-_{>}}[r]^{}& X \\
} \] 
with properties:
\begin{enumerate}
\renewcommand{\labelenumi}{(\roman{enumi})}
\item $Z\cong Q^5 \hookrightarrow X$ is a natural inclusion;
\item ${\rm Bl}_Z(X)$ is the blow-up of $X$ along $Z\cong Q^5$ and $E$ is its exceptional divisor;
\end{enumerate}
\end{proposition}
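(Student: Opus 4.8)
The plan is to realize $\mathrm{Bl}_Z(X)$ as the common resolution of the two extremal contractions of a Picard number two variety: one contraction being the blow-down to $X$, the other the sought morphism to $Q^5$. Since $X$ is a linear section of $S_4$ with $\rho_X=1$, the blow-up ${\rm Bl}_Z(X)$ has $\rho=2$, so the whole diagram will follow once I (a) produce the distinguished quadric $Z\cong Q^5\subset X$, (b) understand the exceptional divisor $E\to Z$, and (c) exhibit a fibre-type contraction of the second extremal ray onto a five-fold and identify that five-fold as $Q^5$.

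First I would locate $Z$. Writing $X=S_4\cap\P^{13}$ as the section cut out by a pencil of spinors in the dual half-spin representation $\Delta^{-}$, the non-special hypothesis (Definition~\ref{def:special}) says that the corresponding line in $\P(\Delta^{-})$ avoids the dual spinor variety. Using the $\mathrm{Spin}_{10}$-equivariant incidence geometry of pure spinors, I would show that such a pencil singles out a canonical five-dimensional quadric $Z\cong Q^5\subset X$, described as the locus of pure spinors whose associated maximal isotropic subspaces meet the data of the pencil in a prescribed dimension. One then checks that $Z$ has codimension three in $X$, so that the exceptional divisor $E\to Z$ of the blow-up is a $\P^2$-bundle over $Q^5$ of dimension seven, as recorded in (ii); identifying $N_{Z/X}$ explicitly (I would expect a spinor-type rank-three bundle) makes this precise.

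The heart of the argument is the construction of the morphism ${\rm Bl}_Z(X)\to Q^5$. I would produce a dominant rational map $X\dashrightarrow Q^5$ — for instance by a suitable linear projection associated to the span $\langle Z\rangle$, or, more robustly, by the equivariant construction attaching to a general point of $X$ the relevant five-dimensional quadric in the spinor flag — and show that its indeterminacy is resolved after exactly one blow-up of $Z$. The resulting morphism has three-dimensional fibres and ought to be a projective (or quadric) bundle over $Q^5$; here the spinor and Ottaviani package of Subsection~\ref{subsec:ott}, in particular the rank-four bundle $\cS^{\ast}$ on $Q^5$ from diagram~(\ref{eq:diag:2}), is the natural candidate for the bundle whose projectivization governs the fibration. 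I would then verify that $E$ is exactly the divisor contracted by the blow-down to $X$ and that it meets the fibres of ${\rm Bl}_Z(X)\to Q^5$ in the expected way, closing up the diagram.

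The main obstacle I anticipate is this last step: controlling the resolution of $X\dashrightarrow Q^5$ and pinning down the target as $Q^5$ rather than some other five-fold. This demands a detailed grip on the projective geometry of $S_4\subset\P^{15}$ — the families of linear spaces and quadrics it contains and the behaviour of its second fundamental form along $Z$ — and it is exactly here that the non-special hypothesis is indispensable (in the special case Proposition~\ref{prop:line} already supplies a line with non-nef normal bundle, so no such birational link is needed). This analysis is carried out in detail by Kuznetsov, and for the precise statement I would ultimately invoke \cite[Proposition~6.8, Corollary~6.11]{Kuz}.
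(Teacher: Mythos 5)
The paper offers no proof of this proposition at all: it is imported verbatim from Kuznetsov (\cite[Proposition~6.8, Corollary~6.11]{Kuz}), and since your argument also ultimately rests on exactly that citation, it takes essentially the same approach as the paper. Your surrounding sketch is moreover consistent with the actual geometry — $Z\cong Q^5$ has codimension three in the $8$-fold $X$, the exceptional divisor is a $\P^2$-bundle over $Q^5$ controlled by a spinor-type rank-three (Ottaviani) bundle as the paper confirms in Proposition~\ref{prop:ott:bdl}, and non-specialness corresponds to the pencil avoiding the dual spinor variety — so nothing in the heuristic scaffolding conflicts with Kuznetsov's construction.
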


\begin{proposition}\label{prop:ott:bdl} Under the assumption of Proposition~\ref{prop:str}, $E$ is isomorphic to $\P(\cG_o)$, where $\cG_o$ is an Ottaviani bundle.
\end{proposition}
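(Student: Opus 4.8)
The plan is to realize $E$ as the projectivization of a rank-three bundle on a $Q^5$ and then recognize that bundle, up to a twist by a line bundle, as an Ottaviani bundle. Since $Z \cong Q^5$ has codimension three in the $8$-fold $X$, its normal bundle $N_{Z/X}$ has rank three, and the exceptional divisor of the blow-up ${\rm Bl}_Z(X) \to X$ is the projectivized normal bundle, a $\P^2$-bundle over $Z \cong Q^5$. Because projectivization is unchanged by tensoring with a line bundle, it is enough to show that the associated rank-three bundle on $Z \cong Q^5$ agrees with $\cG_o$ after such a twist. Note that $E$ carries two morphisms to a $Q^5$: the exceptional projection $E \to Z \cong Q^5$, and the restriction to $E$ of the contraction ${\rm Bl}_Z(X) \to Q^5$ of Proposition~\ref{prop:str}.

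My first attempt would be to match the diagram of Proposition~\ref{prop:str} with the incidence correspondence (\ref{eq:diag:2}). Concretely, I would try to identify $E$ with the preimage $q^{-1}(Q^5)$, where $Q^5 \subset Q^6 \cong S_3$ is an appropriate hyperplane section, in such a way that the exceptional projection $E \to Z \cong Q^5$ corresponds to the bundle projection $p \colon q^{-1}(Q^5) \to Q^5$ and the induced map $E \to Q^5$ corresponds to $q$. Once this dictionary is in place, Proposition~\ref{prop:ott} applies verbatim to $q^{-1}(Q^5)$ and yields $E \cong \P_{Q^5}(\cG_o)$, which is exactly the assertion.

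Should a clean geometric identification prove elusive, the fallback is a numerical argument. From the normal bundle sequence $0 \to T_{Q^5} \to T_X|_Z \to N_{Z/X} \to 0$, together with a computation of the restriction $T_X|_Z$ of the tangent bundle of the linear section $X \subset S_4$, one obtains the Chern classes of $N_{Z/X}$. After twisting to normalize the first Chern class, I would check that the Chern classes of the resulting bundle are $(2,2,2)$ and that it is stable, and then invoke the characterization of Ottaviani bundles in Theorem~\ref{them:ott}.

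I expect the main obstacle to be exactly the identification step. Matching the blow-up diagram of Proposition~\ref{prop:str} with the spinor incidence correspondence requires understanding the precise geometric meaning of the center $Z \cong Q^5 \subset X$ and of the contraction ${\rm Bl}_Z(X) \to Q^5$ in terms of the $\P^2$'s in $Q^5$ parametrized by $S_3 \cong Q^6$; in the numerical fallback, the delicate points are the explicit determination of $T_X|_Z$ and the verification of stability. In either approach, once the relevant rank-three bundle on $Q^5$ is recognized as a twist of $\cG_o$, the conclusion $E \cong \P(\cG_o)$ is immediate.
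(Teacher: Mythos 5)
Your first route coincides with the paper's actual strategy, but the step you yourself flag as the ``main obstacle'' --- the identification of $E$ with $q^{-1}(Q^5)$ --- is precisely the entire content of the paper's proof, and you leave it as a hope rather than an argument. The paper does not match the two diagrams directly on $X$; it first lifts the picture one dimension up. By \cite[Lemma~3.3]{Kuz} and the proof of \cite[Proposition~6.8]{Kuz}, one writes $X=X_1\cap H$ and $Z=Z_1\cap H$ scheme-theoretically, where $X_1\subset S_4$ is a smooth hyperplane section containing a quadric $Z_1\cong Q^6$ and $H\subset\P^{15}$ is a hyperplane. Kuznetsov's Corollary~5.11 then identifies the exceptional divisor $E_1$ of ${\rm Bl}_{Z_1}(X_1)\to X_1$, together with its projection $E_1\to Z_1$, with $q\colon F\to S_3\cong Q^6$ of diagram (\ref{eq:diag:2}); cutting with $H$ gives $E=q^{-1}(Q^5)$ for the hyperplane section $Q^5=Z\subset Q^6$, and Proposition~\ref{prop:ott} finishes. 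Without this chain (or an equivalent substitute) your argument reduces to a restatement of the goal.

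A second, more subtle point: your proposed dictionary is oriented backwards. Under the correct identification $Z$ plays the role of a \emph{hyperplane section of $S_3\cong Q^6$}, so the exceptional projection $E\to Z$ corresponds to the restriction of $q$, while the restriction to $E$ of the contraction ${\rm Bl}_Z(X)\to Q^5$ corresponds to $p$ --- the opposite of what you wrote. For the bare statement $E\cong\P(\cG_o)$ the orientation is immaterial (any identification $E\cong q^{-1}(Q^5)$ suffices), but it matters downstream: Proposition~\ref{prop:codim2} needs the \emph{exceptional} projection $E\to Z$, i.e. $\P(N^{\ast}_{Z/X})\to Z$, to be the $\cG_o$-projectivization, since that is what makes $N^{\ast}_{Z/X}$ a twist of $\cG_o$ and produces the non-nef line. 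Your numerical fallback suffers from the same missing input: adjunction does give the right normalization ($c_1(N^{\ast}_{Z/X})=-h$, so $N^{\ast}_{Z/X}(1)$ has $c_1=2$), but computing $c_2$ and $c_3$ requires knowing $N_{Z/S_4}$, i.e. how $Q^5$ sits inside $S_4$ --- which is again Kuznetsov's analysis --- and the stability hypothesis of Theorem~\ref{them:ott} would still have to be verified separately. So both of your routes stall at the same unproved geometric identification that the paper supplies by citation.
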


\begin{proof} Let $X$ be a smooth linear section $X \subset S_4$ of codimension two. By \cite[Lemma~3.3]{Kuz} and the proof of \cite[Proposition~6.8]{Kuz}, we may find a smooth hyperplane section $X_1$ of $S_4 \subset \P^{15}$, a smooth quadric $Z_1=Q^6 \subset X_1$ and a hyperplane $H  \subset \P^{15}$ such that scheme-theoretically $X=X_1\cap H$ and $Z=Z_1\cap H$. Let $E_1$ be the exceptional divisor of the blow-up ${\rm Bl}_{Z_1}(X_1) \to X_1$. According to \cite[Corollary~5.11]{Kuz}, the natural projection $E_1 \to Z_1$ is isomorphic to $q: F \to S_3$ as in the diagram (\ref{eq:diag:2}) in Subsection~\ref{subsec:ott}. Under the identification $E_1 \cong F$, $E$ is $q^{-1}(Q^5)$. Applying Proposition~\ref{prop:ott}, we concludes our assertion.
\end{proof}

\begin{proposition}\label{prop:codim2} Let $X$ be a smooth linear section $X \subset S_4$ of codimension two. If $X$ is non-special, then the tangent bundle of $X$ is not nef.
\end{proposition}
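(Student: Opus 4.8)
The plan is to exhibit a single line $\ell\subset X$ whose normal bundle $N_{\ell/X}$ fails to be nef. Since the differential of the inclusion gives a surjection $T_X|_{\ell}\twoheadrightarrow N_{\ell/X}$ and every bundle on $\ell\cong\P^1$ splits, non-nefness of $N_{\ell/X}$ forces a negative quotient line bundle of $T_X|_\ell$, hence $T_X$ is not nef. The only distinguished subvariety available is $Z\cong Q^5\subset X$ from Proposition~\ref{prop:str}, so I would hunt for $\ell$ inside $Z$.

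First I would pin down $N_{Z/X}$. As the exceptional divisor of $\Bl_Z(X)\to X$ we have $E\cong\P(N_{Z/X}^{\ast})$ in the $\Proj$-of-$\Sym$ convention, while Proposition~\ref{prop:ott:bdl} gives $E\cong\P(\cG_o)$; hence $N_{Z/X}^{\ast}\cong\cG_o(m)$ for some line-bundle twist, and the twist is fixed by first Chern classes. Using $-K_X=6H$ (coindex three, $\dim X=8$), $K_{Q^5}=-5h$ and $H|_Z=h$, adjunction gives $c_1(N_{Z/X})=h$; comparing with $c_1(\cG_o)=2h$ yields $N_{Z/X}\cong\cG_o^{\ast}(1)$.

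Next I would restrict to lines in $Z$. For a line $\ell\subset Q^5$ one has $N_{\ell/Z}\cong\cO_\ell(1)\oplus\cO_\ell^{\oplus 3}$, which is nef, so the obstruction must come from $N_{Z/X}|_\ell=\cG_o^{\ast}(1)|_\ell$. Since $\cS$ is $\mathrm{Spin}(7)$-homogeneous it is uniform, with $\cS^{\ast}|_\ell\cong\cO_\ell(1)^{\oplus 2}\oplus\cO_\ell^{\oplus 2}$, and the defining sequence $0\to\cO\to\cS^{\ast}\to\cG_o\to 0$ shows $\cG_o|_\ell$ is globally generated of degree $2$; the only two splitting types are the generic $\cO_\ell(1)^{\oplus 2}\oplus\cO_\ell$ and the jumping $\cO_\ell(2)\oplus\cO_\ell^{\oplus 2}$. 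On a jumping line, $\cG_o^{\ast}(1)|_\ell\cong\cO_\ell(-1)\oplus\cO_\ell(1)^{\oplus 2}$ has a negative quotient $\cO_\ell(-1)$, which through $0\to N_{\ell/Z}\to N_{\ell/X}\to N_{Z/X}|_\ell\to 0$ is a negative quotient of $N_{\ell/X}$ — exactly the non-nefness sought.

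The main obstacle is thus producing one jumping line, i.e.\ a line $\ell\subset Q^5$ with $\cG_o|_\ell\cong\cO_\ell(2)\oplus\cO_\ell^{\oplus 2}$. Because $\cG_o$ is only $G_2$-homogeneous rather than $\mathrm{Spin}(7)$-homogeneous, it is not uniform; concretely, taking cohomology of the defining sequence twisted by $\cO_\ell(-2)$ shows the jumping locus is precisely where the $\cO_\ell^{\oplus 2}$-component of the trivial section $\cO\hookrightarrow\cS^{\ast}$ vanishes along $\ell$ — two conditions on the seven-dimensional family of lines. I would verify this locus is non-empty using Ottaviani's analysis of $\cG_o$ on lines and the $G_2$-orbit structure of lines in $Q^5$. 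Granting a single such line, the argument above concludes the proof; together with Proposition~\ref{prop:line} for the special case this disposes of $\dim X=8$.
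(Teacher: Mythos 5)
Your proposal is correct and follows essentially the same route as the paper: both arguments use Proposition~\ref{prop:ott:bdl} to identify $E\cong\P(\cG_o)$ over $Z\cong Q^5$, fix the line-bundle twist by adjunction (the paper does this per line via the triple $\ell\subset Q^5\subset X$ and the integer $\alpha=-1$, you do it globally to get $N_{Z/X}\cong\cG_o^{\ast}(1)$, which is equivalent), and then restrict to a jumping line where $N_{Z/X}|_{\ell}\cong\cO_{\P^1}(-1)\oplus\cO_{\P^1}(1)^{\oplus 2}$ is a quotient of $N_{\ell/X}$. The one step you defer---existence of a line with $\cG_o|_{\ell}\cong\cO_{\P^1}(2)\oplus\cO_{\P^1}^{\oplus 2}$---is exactly what the paper also takes as a citation, namely the special lines in Ottaviani's table \cite[P.98, Table~1]{O}, so nothing essential is missing.
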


\begin{proof} By \cite[P.98, Table~1]{O}, there exists a special line $\ell$ on $Q^5$ satisfying $\cG|_{\ell} \cong \cO_{\P^1}(2)\oplus \cO_{\P^1}^{\oplus 2}$. Applying Proposition~\ref{prop:ott:bdl}, we have $E=\P(N_{Q^5/X}^{\ast}|_{\ell})\cong \P(\cG_{\ell})$. This tells us that there exists an integer $\alpha$ such that 
$$
N_{Q^5/X}^{\ast}|_{\ell} \cong \cO_{\P^1}(2+\alpha)\oplus \cO_{\P^1}(\alpha)^{\oplus 2}.
$$
Applying the adjunction formula to the triple $\ell \subset Q^5 \subset X$, we see that $\alpha=-1$. This implies that 
$$
N_{Q^5/X}|_{\ell} \cong \cO_{\P^1}(-1)\oplus \cO_{\P^1}(1)^{\oplus 2}.
$$ 
Since we have a surjection $N_{\ell/X} \to N_{Q^5/X}|_{\ell}$, we concludes that $N_{\ell/X}$ is not nef.
\end{proof}

\subsubsection{\rm \bf  The case of $\dim X=7$}

Our goal of this subsection is to prove: 

\begin{proposition}\label{prop:codim3} Let $X$ be a smooth linear section $X \subset S_4$ of codimension three. Then the tangent bundle of $X$ is not nef.
\end{proposition}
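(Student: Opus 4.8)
The plan is to argue by contradiction: assume that $T_X$ is nef and produce a minimal rational curve on $X$ whose normal bundle is not nef. By Remark~\ref{rem:cases} we are in the remaining case $g=7$, $\dim X=7$, i.e.\ $X\subset S_4$ is a smooth codimension three linear section. Adjunction gives $K_X=(K_{S_4}+3H)|_X=-5H|_X$, so $X$ is a Fano $7$-fold of index $5$, and its lines $\ell$ (with $H\cdot\ell=1$) satisfy $-K_X\cdot\ell=5$; these lines are the minimal rational curves and $\iota_X=5$. If $T_X$ is nef, then for \emph{every} line $\ell$ the bundle $T_X|_\ell$ is nef of degree $5$, hence $T_X|_\ell\cong\cO_\ell(2)\oplus\cO_\ell(1)^{\oplus 3}\oplus\cO_\ell^{\oplus 3}$ and $N_{\ell/X}\cong\cO_\ell(1)^{\oplus 3}\oplus\cO_\ell^{\oplus 3}$. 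Thus a \emph{general} line yields no contradiction, and the whole point is to locate a \emph{special} line violating this splitting; for this I would use the theory of varieties of minimal rational tangents.

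Next I would identify the VMRT. At a general point $x\in X$ the VMRT $\cC_x\subset\P(T_xX)=\P^6$ has dimension $\iota_X-2=3$. The VMRT of the spinor variety $S_4$ at a point is the Grassmannian $G(2,5)\subset\P^9$ in its Pl\"ucker embedding, and for a linear section the VMRT is cut out by the corresponding linear section of the ambient VMRT. Hence $\cC_x=G(2,5)\cap\P^6$, a codimension three linear section of $G(2,5)\subset\P^9$; for $X$ smooth and $x$ general this is a smooth quintic del Pezzo threefold, so $\cC_x\cong V_5$ in the notation of \S\ref{subsec:NC}. A preliminary technical point is to verify that the relevant linear section is transverse, so that the tangent map at $x$ is generically an isomorphism onto the smooth threefold $V_5$, exactly as for general three-dimensional sections of $G(2,5)$.

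The crux is then to exploit the non-homogeneity of $V_5$. The quintic del Pezzo threefold is only quasi-homogeneous: it carries a $\mathrm{PGL}_2$-action with a dense orbit but is \emph{not} a homogeneous space, and by the Campana--Peternell classification in dimension three (contained in Theorem~\ref{thm:CP5}) its tangent bundle is not nef. On the other hand, for a CP manifold the VMRT at a general point is smooth and inherits strong positivity from $T_X$ (the family of minimal rational curves through $x$ becomes unbendable, and the projective geometry of $\cC_x\subset\P(T_xX)$ is forced to be positive). The contradiction should come from converting this positivity, together with the failure of nefness of $T_{V_5}$, into an explicit special line: one chooses a line $m\subset V_5\subset\P(T_xX)$ (equivalently a direction in the VMRT along which the positivity of $V_5$ breaks down) and transports it, via the tangent map, to a one-parameter family of minimal rational curves on $X$ whose general member $\ell$ has $N_{\ell/X}$ carrying a negative summand, contradicting the splitting forced in the first paragraph.

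The step I expect to be the main obstacle is precisely this transfer: turning the non-nefness of $T_{V_5}$ --- a statement about the VMRT as a projective variety --- into an honest minimal rational curve on $X$ possessing a negative normal direction. This requires the precise dictionary between the splitting type of $N_{\ell/X}$ and the local structure of $\cC_x$ at $[\ell]$ (its tangent spaces and second fundamental form), together with a check that the chosen line of $V_5$ genuinely corresponds to a line of $X$ through a point where the analysis applies. Establishing transversality of the defining linear section (so that $\cC_x\cong V_5$ on the nose) and excluding degenerate configurations of $X$ are the two technical matters I would settle before carrying out the normal-bundle computation.
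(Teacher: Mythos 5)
Your opening reductions are sound and coincide with the paper's Lemma~\ref{lem:VMRT}: under the nefness hypothesis every line $\ell$ satisfies $T_X|_\ell \cong \cO_{\P^1}(2)\oplus\cO_{\P^1}(1)^{\oplus 3}\oplus\cO_{\P^1}^{\oplus 3}$, and the VMRT is a smooth codimension-three linear section of $G(2,5)\subset\P^9$, hence isomorphic to $V_5$ by Fujita's uniqueness theorem. (One correction even here: nefness makes the evaluation morphism $q$ smooth, so this holds at \emph{every} point $x\in X$, not merely a general one --- and this strengthening is essential to the paper's argument, as you will see below.) But the step you yourself flag as ``the main obstacle'' is a genuine gap with no available mechanism, and I believe it fails outright. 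The splitting type of $N_{\ell/X}$ is governed by the embedded projective geometry of $\cC_x\subset\P(T_xX)$ at the point $[\ell]$ (tangent spaces, second fundamental form), not by the abstract tangent bundle of the VMRT as a variety; there is no theorem that the VMRT of a CP manifold must itself be a CP manifold, so the non-nefness of $T_{V_5}$ yields no contradiction by itself. Worse, the configuration ``all lines have the standard nef splitting and all VMRTs are smooth copies of $V_5\subset\P^6$'' is internally consistent at the level of this local dictionary, so no pointwise analysis of $\cC_x$ can manufacture a line with a negative normal summand. Note the contrast with the codimension-two case (Propositions~\ref{prop:line} and \ref{prop:codim2}): there the negative line is produced \emph{unconditionally} from explicit geometry (a $4$-plane, or the Ottaviani-bundle exceptional divisor), not extracted from the nefness hypothesis.

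The paper instead exploits the non-homogeneity of $V_5$ \emph{globally}. Since $q:\cU_X\to X$ is a $V_5$-bundle over all of $X$ and $\mathrm{Aut}(V_5)\cong \mathrm{PGL}(2)$ with orbit decomposition $O_3\sqcup O_2\sqcup O_1$, $O_1\cong\P^1$ (Theorem~\ref{them:MU}), the bundle is classified by a cocycle in $H^1(X,\mathrm{PGL}(2))$, and the closed orbits $O_1$ of the fibers glue to a sub-$\P^1$-bundle $\cW\to X$ of $\cU_X$. A second-fundamental-form argument (the base locus of the second fundamental form of $V_5$ at $z$ is the set of lines through $z$, whose cardinality distinguishes the orbits) shows that each fiber of $p$ is either contained in $\cW$ or disjoint from it, so $p|_{\cW}:\cW\to p(\cW)$ is a \emph{second} $\P^1$-bundle structure on $\cW$. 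The classification of manifolds with two $\P^1$-bundle structures then forces $\cW\cong G/B$ with $G$ semisimple of rank two, whence $X$ is one of $\P^1,\P^2,\P^3,Q^3,Q^5,K(G_2)$ --- all of dimension at most six, contradicting $\dim X=7$. To repair your proposal you would need global input of this kind; knowing the isomorphism type of $\cC_x$ fiber by fiber, even at every point, cannot by itself produce the special line your plan requires.
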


We shall prove this by contradiction. 
Let us begin by setting up our notation:
\begin{NA}\label{NA} Let $X$ be a smooth linear section $X \subset S_4$ of codimension three. Assume that the tangent bundle of $X$ is nef. We denote by $\cM_X$ the Hilbert scheme of lines on $X$ and $\cU_X$ its universal family. Then we have the associated universal morphisms $p: \cU_X \rightarrow \cM_X$ and $q: \cU_X \rightarrow X$. For any point $x \in X$, we denote by $\cM_{X, x}$ the Hilbert scheme of lines on $X$ passing through $x$. 
\end{NA}
Under the above setting \ref{NA}, $X \subset \P^{12}$ is a $7$-dimensional CP manifold of $\rho_X=1$. Then we have the following:

\begin{lemma}\label{lem:VMRT} Under the setting as in \ref{NA}, the following holds. 
\begin{enumerate}
\item For any $[{\ell}]\in \cM_X$, we have
$$T_X|_{\ell} \cong \cO_{\P^1}(2) \oplus \cO_{\P^1}(1)^{\oplus 3} \oplus \cO_{\P^1}^{\oplus 3}.$$
\item The evaluation morphism $q$ is a smooth morphism and $p$ is a $\P^1$-bundle. 
\item There exists a closed embedding $\tau: \cU_X \hookrightarrow \P(\Omega_X)$ such that $\tau^*\cO_{\P(\Omega_X)}(1) \cong \omega_{\cU_X/\cM_X}$, where $\cO_{\P(\Omega_X)}(1)$ is the tautological line bundle of $\P(\Omega_X)$ and $\omega_{\cU_X/\cM_X}=\Omega_{\cU_X/\cM_X}$ is the relative canonical line bundle of $p:\cU_X \to \cM_X$, and $\tau$ satisfies the following commutative diagram 
\[\xymatrix{
& \P(\Omega_X) \ar[d]^{} \\
\cU_X \ar[r]^{q} \ar@{^{(}-_{>}}[ur]^{\tau} & X}\]
\item $\cM_X$ is a projective manifold.  
\item The evaluation morphism $q: \cU_X \to X$ is a $V_5$-bundle, where $V_5$ is a smooth linear section of $G(2, 5) \subset \P^9$ of codimension three. 
\end{enumerate}
\end{lemma}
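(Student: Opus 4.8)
The plan is to reduce everything to the geometry of lines on the ambient spinor variety and then to run the standard deformation theory of minimal rational curves, the one delicate input being the restriction of $T_{S_4}$ to a line.

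First I would note that, since $\cO_X(1)$ is the restriction of $\cO_{\P^{15}}(1)$, a line on $X$ is exactly a line on $S_4$ contained in $X$; thus every $[\ell]\in\cM_X$ is a line on the rational homogeneous variety $S_4$. As lines on a rational homogeneous space of Picard number one are standard and $-K_{S_4}\cdot\ell=8$, one has $T_{S_4}|_\ell\cong\cO_{\P^1}(2)\oplus\cO_{\P^1}(1)^{\oplus 6}\oplus\cO_{\P^1}^{\oplus 3}$ (the six copies of $\cO_{\P^1}(1)$ reflecting that the variety of minimal rational tangents of $S_4$ is the $6$-dimensional Grassmannian $G(2,5)$). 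Since $X=S_4\cap\P^{12}$ is cut out in $S_4$ by three hyperplanes, $N_{X/S_4}|_\ell\cong\cO_{\P^1}(1)^{\oplus 3}$, and the normal bundle sequence restricted to $\ell$ reads $0\to T_X|_\ell\to T_{S_4}|_\ell\to\cO_{\P^1}(1)^{\oplus 3}\to 0$. Writing the nef bundle $T_X|_\ell=\bigoplus_{i=1}^7\cO_{\P^1}(a_i)$ with $a_i\ge 0$ and $\sum_i a_i=5$, the inclusion into $T_{S_4}|_\ell$ forces each $a_i\le 2$ (there is no nonzero map $\cO_{\P^1}(a)\to T_{S_4}|_\ell$ once $a\ge 3$); moreover every map $\cO_{\P^1}(2)\to T_{S_4}|_\ell$ factors through the unique degree-two summand, so the subbundle $T_X|_\ell$ contains at most one copy of $\cO_{\P^1}(2)$. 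As $T_\ell=\cO_{\P^1}(2)$ is a subbundle, exactly one $a_i$ equals $2$, pinning the splitting down to $(2,1,1,1,0,0,0)$ and proving (1).

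From (1) the normal bundle is $N_{\ell/X}\cong\cO_{\P^1}(1)^{\oplus 3}\oplus\cO_{\P^1}^{\oplus 3}$, which is nef and satisfies $H^1(\ell,N_{\ell/X})=0$ as well as $H^1(\ell,N_{\ell/X}(-x))=0$ for every $x\in\ell$. The first vanishing makes $\cM_X$ smooth at $[\ell]$ of dimension $h^0(N_{\ell/X})=9$; being closed in the Hilbert scheme it is projective, and its connectedness follows once the fibration structure of (5) is in place, giving (4). The universal family $p:\cU_X\to\cM_X$ has the lines as fibers, hence is a $\P^1$-bundle. The second vanishing is precisely the criterion guaranteeing that the evaluation $q:\cU_X\to X$ is smooth, of relative dimension $\dim\cU_X-\dim X=10-7=3$; this is (2). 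For (3) I would form the tangent map: the relative tangent line bundle $T_{\cU_X/\cM_X}$ sits inside $q^*T_X$, and projectivizing it as a sub-line-bundle yields a morphism $\tau:\cU_X\to\P(\Omega_X)$ over $X$ carrying $(\ell,x)$ to the direction $[T_x\ell]$, with $\tau^*\cO_{\P(\Omega_X)}(1)\cong\omega_{\cU_X/\cM_X}$ by construction. That $\tau$ is a closed embedding is the usual VMRT tangent-map fact: $\tau$ is injective because a line in $\P^{15}$ is recovered from one of its points together with the tangent direction there, and it is unramified because the curves are standard.

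Finally, (5) is the crux. Through $\tau$ each fibre $q^{-1}(x)\cong\cM_{X,x}$ is identified with its image $\cC_x\subset\P(T_xX)=\P^6$, the variety of minimal rational tangents of $X$ at $x$. Because $X=S_4\cap\P^{12}$ is a linear section, a line of $S_4$ through $x$ lies on $X$ if and only if its tangent direction lies in $\P(T_xX)\subset\P(T_xS_4)=\P^9$; hence $\cC_x=\cC_x^{S_4}\cap\P^6$, where $\cC_x^{S_4}=G(2,5)\subset\P^9$ is the VMRT of $S_4$. Thus $\cC_x$ is a codimension-three linear section of $G(2,5)\subset\P^9$, and by the smoothness of $q$ established in (2) it is smooth of dimension $3$; since all smooth three-dimensional linear sections of $G(2,5)$ are projectively equivalent to $V_5$, every fibre of $q$ is isomorphic to $V_5$, which is (5). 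I expect the main obstacle to be exactly this identification: one must invoke that the VMRT of $S_4$ is $G(2,5)\subset\P^9$, verify that passing to the linear section $X$ induces the corresponding linear section on VMRTs, and simultaneously use the smoothness of $q$ to ensure the intersection $G(2,5)\cap\P^6$ is a genuinely transverse, smooth $V_5$ rather than a degenerate or excess-dimensional locus.
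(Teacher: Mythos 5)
Your proposal is correct and follows essentially the same route as the paper: nefness plus the ambient splitting type and the index computation pin down $T_X|_\ell$, Koll\'ar-style deformation theory gives smoothness of $q$ and the $\P^1$-bundle $p$, the tangent map yields the embedding into $\P(\Omega_X)$, and the identification of $\cM_{X,x}$ with a smooth codimension-three linear section of the VMRT $G(2,5)\subset\P^9$ of $S_4$ (Landsberg--Manivel) combined with Fujita's uniqueness of $V_5$ gives the $V_5$-bundle structure. The only cosmetic deviations are that you bound the splitting type via the normal bundle sequence in $S_4$ where the paper uses the subbundle inclusion $T_X|_\ell\subset T_{\P^N}|_\ell$, and that you deduce connectedness of $\cM_X$ from the fibration in (5), whereas the paper derives irreducibility directly from the connectedness of positive-dimensional linear sections of the irreducible variety $G(2,5)$.
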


\begin{proof} For any line $[\ell] \in \cM$, $T_X|_{\ell}$ and $T_{\P^{N}}|_{\ell} \cong  \cO_{\P^1}(2) \oplus \cO_{\P^1}(1)^{\oplus 6}$ contain  $T_{\ell}$ and  $T_X|_{\ell}$  as subbundles, respectively. Since $T_X|_{\ell}$ is nef by our assumption, we have 
$$T_X|_{\ell} \cong \cO_{\P^1}(2) \oplus \cO_{\P^1}(1)^{\oplus p} \oplus \cO_{\P^1}^{\oplus 6-p}$$ 
for some integer $p$. Taking the determinant of both sides, we have 
$-K_X\cdot \ell=p+2$. Since the Fano coindex of $X$ is three, $\rm (1)$ holds. 
Furthermore, the former part of $(2)$ follows from \cite[Theorem~2.15 and Corollary~3.5.3]{Kb}. The later part  is trivial. 

Since $q$ is smooth, we have an exact sequence
\begin{eqnarray} q^*\Omega_X \rightarrow \Omega_{\cU_X/\cM_X} \rightarrow 0. \nonumber
\end{eqnarray}
This induces a morphism $\tau: \cU_X \rightarrow \P(\Omega_X)$ over $X$ such that $\tau^*\cO_{\P(\Omega_X)}(1) \cong \Omega_{\cU_X/\cM_X}$. For any $x \in X$, we consider a natural morphism 
$$\tau_x: \cU_{X, x}:=q^{-1}(x) \rightarrow \P(\Omega_{X, x})$$ 
induced from $\tau$. Since $p$ induces an isomorphism 
$$\cU_{X, x} \cong \cM_{X, x}=\{[\ell] \in \cM_X | x \in \ell\},$$
$\tau_x$ can be seen as a morphism $\cM_{X, x} \rightarrow \P(\Omega_{X, x})$, which sends an $\cM_{X, x}$-line to its tangent direction at $x$. Then we see that $\tau_x$ is a closed embedding (see \cite[Proposition~1.5]{Hw} and its proof). Therefore so is $\tau$. Hence $\rm (3)$ holds.

To prove $(4)$, it is enough to prove the irreducibility of $\cM_X$. 
By \cite[II.~Theorem~1.7, Theorem~2.15]{Kb}, $\cM_X$ is an equidimensional disjoint union of projective manifolds. We denote by $\cM_i$ the irreducible components of $\cM_X$ and $\cU_i$ these universal families. The associated universal morphisms are denoted by $p_i: \cU_i \to \cM_i$ and $q_i: \cU_i \to X$. For any point $o \in S_4$, the Hilbert scheme $\cM_{S_4, o} \subset \P(\Omega_{S_4, o})$ of lines passing through $o$ is projectively equivalent to $G(2, 5) \subset \P^9$ by \cite[Theorem~4.3]{LM} (see also \cite[Section~2.4]{MOSWW}). Thus, for any $x \in X$, $\cM_{X, x} \subset \P(\Omega_{X, x})$ is projectively equivalent to $V_5 \subset \P^6$. This implies that $q^{-1}(x) =\bigsqcup\, q_i^{-1}(x)$ is isomorphic to a $3$-dimensional linear section of $G(2, 5) \subset \P^9$. Since a positive-dimensional general linear section of an irreducible variety is again irreducible, any linear section of an irreducible variety is connected. This tells us that $\cM_X$ consists of a single irreducible component, i.e. $\cM_X$ is irreducible. Finally, since all smooth linear sections of $G(2,5) \subset \P^9$ are projectively equivalent by \cite[Theorem~7.6]{Fuj2}, $q^{-1}(x)\subset \P(\Omega_{X, x})$ is projectively equivalent to $V_3$. Thus (5) holds.   
\end{proof}

\begin{theorem}[{\cite[Section~5.1]{KPS}}]\label{them:MU} Let $V_5$ be a $3$-dimensional smooth linear section of $G(2,5) \subset \P^9$. 
Then the automorphism group of $V_5$ is ${\rm Aut}(Y)\cong {\rm PGL(2)}$. Moreover the orbit decomposition of $Y$ is 
$$
Y={O}_3 \sqcup {O}_2 \sqcup {O}_1
$$
with properties
\begin{enumerate}
\item $\dim { O}_i=i$ for any $i$;
\item ${O}_1 \cong {\rm PGL(2)}/B \cong \P^1$, where $B$ is a Borel subgroup of ${\rm PGL(2)}$;  
\item the Hilbert scheme of lines on $V_5$ passing through a point consists of $i$ points for any $1 \leq i \leq 3$.
\end{enumerate}
\end{theorem}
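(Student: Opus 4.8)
The plan is to realize $V_5$ together with its group action through the classical $\mathrm{SL}_2$-model and then to read off every assertion from this model and from the geometry of lines. Write $R_d:=\Sym^d \C^2$ for the $(d{+}1)$-dimensional irreducible representation of $\mathrm{SL}_2$. Clebsch--Gordan gives $\wedge^2 R_4 \cong R_6 \oplus R_2$ as $\mathrm{SL}_2$-modules, so the invariant subspace $\P(R_6)\subset \P(\wedge^2 R_4)=\P^9$ is a $\P^6$, and I would set $V_5:=G(2,R_4)\cap \P(R_6)$, where $G(2,R_4)=G(2,5)$. Since $-\,\mathrm{id}\in \mathrm{SL}_2$ acts on $R_4$ as $(-1)^4=\mathrm{id}$, the action descends to a faithful $\mathrm{PGL}_2$-action; one checks this linear section is smooth of dimension three, and by the projective uniqueness of smooth three-dimensional linear sections of $G(2,5)$ (\cite[Theorem~7.6]{Fuj2}) it is the required $V_5$. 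This already yields an injection $\mathrm{PGL}_2 \hookrightarrow \Aut(V_5)$.

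For the orbit decomposition I would compute the stabilizers of three explicit points. Points of $V_5$ are exactly the $2$-planes $U\subset R_4$ whose Pl\"ucker vector has vanishing $R_2$-component, i.e.\ whose third transvectant vanishes. The highest-weight line $U_1=\langle x^4, x^3y\rangle$ is the tangent line to the rational normal quartic at $[x^4]$; its Pl\"ucker vector is the highest-weight vector of $R_6$, it is stabilized by a Borel subgroup $B$, and its orbit is the rational normal sextic $\mathrm{PGL}_2/B\cong \P^1$. This is $O_1$. The line $U_2=\langle x^4, x^2y^2\rangle$ lies on $V_5$ because its Pl\"ucker vector has weight $4$, forcing the $R_2$-component to vanish; it is preserved by the maximal torus but is not a tangent line, so its stabilizer is exactly one-dimensional and its orbit $O_2$ is a surface. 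Finally, since $\dim \mathrm{PGL}_2=3=\dim V_5$, the generic orbit $O_3$ is dense. A check in coordinates that the boundary $V_5\setminus O_3$ is the irreducible surface $\overline{O_2}=O_2\sqcup O_1$ then shows these three orbits exhaust $V_5$, giving $(1)$ and $(2)$.

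For the automorphism group and the count of lines I would pass to the Hilbert scheme of lines $\Sigma$ on $V_5$, classically isomorphic to $\P^2$, on which $\mathrm{PGL}_2$ acts through $\Sym^2\colon \mathrm{PGL}_2\hookrightarrow \mathrm{PGL}_3$; the lines with special normal bundle $\cO_{\P^1}(1)\oplus \cO_{\P^1}(-1)$ form an intrinsic, hence $\Aut(V_5)$-invariant, conic $\Gamma\subset\Sigma$. The universal family $\cU\to\Sigma$ with evaluation $e\colon \cU\to V_5$ is a finite morphism of degree $3$ whose fibre over a point of $O_i$ has exactly $i$ elements; checking this cardinality on the three representative points above gives $(3)$. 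For $\Aut(V_5)\cong \mathrm{PGL}_2$, I would first note that the action of $\Aut(V_5)$ on $\Sigma\cong\P^2$ is faithful: an automorphism fixing every line setwise fixes each of the three concurrent lines through a general point, hence fixes their unique common point, hence is the identity. Thus $\Aut(V_5)\hookrightarrow \mathrm{PGL}_3$, and every such automorphism preserves $\Gamma$, so it lands in the stabilizer of $\Gamma$, which is precisely the image of $\Sym^2\colon \mathrm{PGL}_2\to \mathrm{PGL}_3$. Since the embedded $\mathrm{PGL}_2$ already surjects onto this stabilizer, faithfulness forces $\Aut(V_5)\cong \mathrm{PGL}_2$.

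The main obstacle is the bookkeeping that makes the three pieces fit exactly: verifying that the three explicit orbits really exhaust $V_5$ with no extra stratum, and pinning down the precise fibre cardinalities $3,2,1$ of $e\colon\cU\to V_5$ over $O_3,O_2,O_1$. Both reduce to finite computations in the $\mathrm{SL}_2$-model---vanishing of transvectants and dimensions of stabilizers---but they are exactly where the pattern ``$i$ lines through a point of $O_i$'' is forced, and hence where the care lies.
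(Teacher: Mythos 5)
The paper does not prove this statement: Theorem~\ref{them:MU} is quoted from \cite[Section~5.1]{KPS}, which in turn rests on the classical Mukai--Umemura $\mathrm{SL}_2$-model and the Furushima--Nakayama study of lines, so the relevant comparison is with that argument --- and your proposal reconstructs exactly that route. Your model is correct: $\wedge^2 R_4 \cong R_6 \oplus R_2$, $V_5 = G(2,R_4)\cap \P(R_6)$, membership in $V_5$ being vanishing of the third transvectant $(f,g)_3$; your representatives are the right ones, since $(x^4,x^3y)_3=(x^4,x^2y^2)_3=0$ while $(x^4,xy^3)_3$, $(x^4,y^4)_3$, $(x^3y,x^2y^2)_3$, $(x^3y,xy^3)_3$ are all nonzero, so the torus-invariant points of $V_5$ are precisely the four conjugates of your $U_1,U_2$, and the unique $2$-plane invariant under a regular unipotent is $U_1$ --- this is exactly the ``finite computation'' your plan defers, and it does work out. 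The identification of $\Aut(V_5)$ as the stabilizer of the invariant conic of special lines in $\Sigma\cong\P^2$, with faithfulness via two distinct lines through a general point meeting only there, is also the standard and correct mechanism.

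One inference as written is false: $\dim \mathrm{PGL}_2 = 3 = \dim V_5$ does \emph{not} by itself give a dense orbit (a three-dimensional group can act on a threefold with every orbit of dimension $\le 2$; e.g.\ $\mathrm{PGL}_2$ acting on $\P^1\times S$ through the first factor). You must either exhibit a point with finite stabilizer, or argue that any point with positive-dimensional stabilizer is fixed by a maximal torus or a regular unipotent, and that by the transvectant computation above these points sweep out only the two-dimensional set $O_2\sqcup O_1$ --- which is precisely the boundary-exhaustion check you defer, so the slip is self-repairing once that check is actually carried out, but as stated the density of $O_3$ is unjustified. Beyond this, the items you postpone are not mere bookkeeping but the substance of the classical proof: that $\Sigma\cong\P^2$ with the evaluation $\cU\to V_5$ finite of degree $3$, and the fibre counts $3,2,1$ over $O_3,O_2,O_1$, are nontrivial inputs (due to Furushima--Nakayama, reproved in \cite[Section~5.1]{KPS}); with them granted, your argument is a correct and faithful reconstruction of the cited proof, and without them it is a complete plan rather than a complete proof.
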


\begin{proof}[Proof of Proposition~\ref{prop:codim3}] The main idea of the following proof is similar to that of \cite{OSWi}. We will freely use the notation appearing in \cite{OSWi}.

Let $X$ be as in \ref{NA}. Under the notation \ref{NA}, we see that the evaluation morphism $q: \cU_X \to X$ is a $V_5$-bundle by Lemma~\ref{lem:VMRT}. Then $q: \cU_X \to X$ is locally trivial in the analytic topology. Thus $q$ defines a cocycle $$\theta \in H^1(X, {\rm Aut} (V_5)) \cong H^1(X,  {\rm PGL(2)}) .$$
On the other hand, the cocycle $\theta \in H^1(X,  {\rm PGL(2)})$ defines a principal ${\rm PGL}(2)$-bundle $E_{{\rm PGL}(2)}$ over $X$. Then, for a Borel subgroup $B$ of ${\rm PGL}(2)$, consider the ${\rm PGL}(2)/B$-bundle ($=$ $\P^1$-bundle) $E_{B}:=E_{{\rm PGL}(2)}\times_{{\rm PGL}(2)}{\rm PGL}(2)/B\to X$. We denote $E_{B}$ by $\cW$. Then $\cW$ satisfies the following commutative diagram
\[\xymatrix{
\cW \ar[dr]\ar@{_{(}-{>}}[d]  &  \\
\cU_X \ar[r]^{q} & X}\] 
We claim that $p: \cW \to p(\cW)$ is a $\P^1$-bundle. To prove this, it is enough to prove that a fiber of $p: \cU_X \to \cM$ is either contained in $\cW$ or disjoint from it. By \cite[Proposition~3.1]{HH} or \cite[Proposition~2.2]{Mk3}, it suffices to show that the second fundamental form of $V_5$ at a point on the minimal closed orbit $O_1$ is not isomorphic to the one at a point on $O_3 \sqcup O_2$.  Since $V_5$ is a linear section of $G(2,5) \subset \P^9$, it is defined by quadrics. Thus the base locus of the second fundamental form of $V_5$ at a point $z$ is exactly the set of lines on $V_5$ through $z$ (see for instance \cite[Corollary~2.3.6]{russo}). Thus, applying Theorem~\ref{them:MU}, the the second fundamental form at $z \in V_5$ is different depending on whether $z$ is on $\cW$ or not. As a consequence, we see that $p: \cW \to p(\cW)$ is a $\P^1$-bundle. Thus $\cW$ admits two different $\P^1$-bundle structures. It follows from \cite[Appendix~A, Theorem~A.1]{OSWi} that $\cW$ is a complete flag manifold $G/B$, where $G$ is a semisimple linear algebraic group of type $A_1 \times A_1, A_2, B_2$ or $G_2$ and $B$ is its Borel subgroup. It turns out that $X$ is isomorphic to $\P^1, \P^2, \P^3, Q^3, Q^5$ or $K(G_2)$. This is a contradiction. 
\end{proof}

\begin{remark} In the above proof, we use the second fundamental form of $V_5$ to prove $p: \cW \to p(\cW)$ is a $\P^1$-bundle. This idea is based on a referee comments on the paper \cite{OSW}. The author would like to thank the anonymous referee  of the paper  \cite{OSW} for valuable comments.
\end{remark}

\if0
\section{Conclusion}\label{sect:con}

\begin{theorem}\label{them:MT1} 
Let $X$ be a Fano manifold of coindex three. If $X$ has a nef tangent bundle, then $X$ is isomorphic to one of the following:
\begin{enumerate}
\item the $10$-dimensional spinor variety $S_4$;
\item the $8$-dimensional Grassmannian $G(2, 6)$; 
\item the $7$-dimensional symplectic Grassmannian $SG(2, 5)$;
\item the $6$-dimensional symplectic Grassmannian $SG(3, 6)$;
\item $\P^3 \times \P^3$;
\item $\P^2 \times Q^3$; or 
\item $\P(T_{\P^3})$.
\end{enumerate}
In particular, $X$ is homogeneous.
\end{theorem} 
\fi

\bibliographystyle{plain}
\bibliography{biblio}

\end{document}